\documentclass[psamsfonts]{amsproc}

\usepackage{mathtools}
\usepackage{dsfont}
\usepackage{upref}
\usepackage{amssymb}
\usepackage[all]{xy}
\newtheorem{theorem}{Theorem}[section]
\newtheorem{lemma}[theorem]{Lemma}
\newtheorem{proposition}[theorem]{Proposition}
\newtheorem{definition-proposition}[theorem]{Definition-Proposition}

\theoremstyle{definition}
\newtheorem{definition}[theorem]{Definition}
\newtheorem{example}[theorem]{Example}

\theoremstyle{remark}

\numberwithin{equation}{section}

\newcommand{\QQ}{\mathcal Q}
\newcommand{\PP}{\mathcal P}
\newcommand{\XX}{\mathcal X}

\newcommand{\cO}{\mathcal{O}}

\begin{document}

\title{
The Super Lie Groups Associated to Odd Involutions\footnote{2010 Mathematics Subject Classification. Primary 58A50; Secondary 20N99. }}

\author[Mohammadi M.]{Mohammad Mohammadi}
\address{Department of Mathematics, Institute for Advanced Studies in Basic Sciences (IASBS), No. 444, Prof. Yousef Sobouti Blvd.
P. O. Box 45195-1159 Zanjan Iran, Postal Code 45137-66731}
\curraddr{}
\email{moh.mohamady@iasbs.ac.ir}
\thanks{}

\author[Varsaie S.]{Saad Varsaie}
\address{Department of Mathematics, Institute for Advanced Studies in Basic Sciences (IASBS), No. 444, Prof. Yousef Sobouti Blvd.
P. O. Box 45195-1159 Zanjan Iran, Postal Code 45137-66731}
\curraddr{}
\email{varsaie@iasbs.ac.ir}
\thanks{}

\date{}

\begin{abstract}
A new generalization of Grassmannians in supergeometry, called $\nu-$Grassmannians, are constructed by gluing $\nu-$domains. By a $\nu-$domain, we mean a superdomain with an odd involution say $\nu$ on its structure sheaf, as morphism of modules. Then we show that $\nu-$Grassmannians are homogeneous superspaces. In addition, in the last section, a supergroup associated to the odd involution $\nu$ is introduced.

\medskip

\textbf{Keywords:} Super Lie group, supergrassmannian, homogeneous super space, $\nu-$Grassmannian.
\end{abstract}

\maketitle

\section*{Introduction}
In this paper, we introduced a super Lie group in section \ref{newgroup}.  
As far as we know, such a super Lie group has not previously been reported in existing resources. Every thing start from our attempt to generalizing the concept of Chern classes in supergeometry. For this, the classifying space approach was considered. One can show that the supergrassmannians are not suitable candidate as classifying spaces. See \cite{Afshari} and \cite{Roshandel}.

For this reason, we introduced a new generalization of Grassmannians in supergeometry called $\nu-$Grassmannians. These spaces are constructed by using an odd involution on superdomains \cite{OurArticle}. It is shown that the canonical super vector bundle over $\nu-$Grassmannians are universal. In addition, there are cohomology elements, associated to these bundles, which play role as universal Chern classes . See \cite{Afshari} and \cite{Roshandel} for more details. 

Since these spaces are constructed by gluing superdomains, it is good to have a global description for $\nu-$Grassmannians. In order to solve this problem, we show $\nu-$Grassmannians are homogeneous superspaces. While studying the homogeneity of $\nu-$Grassmannians, we find a new supergroup associated to the odd involution $\nu$. One may see more details in the last section. In \cite{Afshari}, \cite{OurArticle}, \cite{MyArticle} and \cite{Roshandel}, different properties of $\nu$- Grassmannians are studied. These works may be considered as the initial steps in introducing $\nu-$supergeometry.

In the first section, we begin by elementary category theory and studying supermanifolds as ringed spaces with particular conditions. We define the category of super Lie groups and super Harish-Chandra pairs as two equivalent categories. Also, we study the actions of super Lie groups on supermanifolds. 

In the second section, we explain the construction of $\nu-$Grassmannians by gluing superdomains. In the third section, we will show that the super Lie group $GL(m|n)$ acts on the supermanifold $_{\nu}G_{k|l}(m|n)$ transitively and it follows that $_{\nu}G_{k|l}(m|n)$ are homogeneous superspaces. In the last section, we will introduce a super Lie subgroup of $GL(m|n)$ associated the involution $\nu.$

\section{Preliminaries} \label{prelim}
In this section, we introduce the basic definitions and results concerning category theory, supermanifolds, $\nu$-manifolds, super Lie groups and action of a super Lie group on a supermanifold and super Harish-Chandra pairs. See \cite{carmelibook} and \cite{vmanifold} for further details.

\subsection{Category theory}
Let $\mathcal{C}$ be a locally small category, and $X$ be an object in $\mathcal{C}$. For any $T\in Obj(\mathcal{C})$, the set $X(T):=Hom_\mathcal{C}(T,X)$ called $T$-points of $X$. Let $\textbf{SET}$ be the category of sets and consider the following functor
$$\begin{matrix} X(.):\mathcal{C} \rightarrow \textbf{SET}\\
\qquad\quad S \mapsto X(S),\\
\\
X(.): Hom_{\mathcal{C}}(S,T)\rightarrow Hom_{\textbf{SET}}(X(T),X(S))\\
\varphi \mapsto X(\varphi),\end{matrix}$$\\
where $X(\varphi):f\mapsto f\circ \varphi.$ The above functor called \textbf{functor of points of $X$}. 

\medskip

Corresponding to each morphism $\psi:X\rightarrow Y$ in $\mathcal C$, there exists a natural transformation
$\psi(.)$ from $X(.)$ to $Y(.)$. So for each object $T,$ one can define the map 
$\psi(T):X(T)\rightarrow Y(T)$ with $\xi\mapsto \psi\circ \xi$ . Now set:
$$\begin{matrix} \mathcal{Y}:\mathcal{C}\rightarrow [\mathcal C,\textbf{SET}]\\ X \mapsto X(.)\\ \psi \mapsto \psi(.). \end{matrix}$$
where $[\mathcal{C}, \textbf{SET}]$ is the category of functors from $\mathcal{C}$ to $\textbf{SET}$. Obviously, $\mathcal{Y}$ is a covariant  functor and it is called \textit{\textbf{Yoneda embedding}}.
Yoneda lemma says that Yoneda embedding is full and faithful functor, i.e. the map $$Hom_\mathcal{C}(X,Y)\longrightarrow Hom_{[\mathcal{C},\textbf{SET}]}(X(.),Y(.)),$$ 
is a bijection for each $X,Y\in Obj(\mathcal{C}). $ 
%
According to Yoneda lemma, $X,Y\in Obj(\mathcal{C})$ are isomorphic if and only if their functor of points are isomorphic. Yoneda embedding is an equivalence between $\mathcal{C}$ and a subcategory of representable functors in $[\mathcal{C},\textbf{SET}]$.

\medskip

Let $X$, $Y$ are objects in a category and $\alpha,\beta:X\rightarrow Y$ are morphisms between these objects. An universal pair $(E,\epsilon)$ is called equalizer if the following diagram commutes:
$$E\xrightarrow{\epsilon} X\overset{\alpha}{\underset{\beta}{\rightrightarrows}}Y$$
i.e., $\alpha \circ \epsilon=\beta \circ \epsilon$ and also for each object $T$ and any morphism $\tau:T\rightarrow X$ which satisfy $\alpha \circ \tau=\beta \circ \tau$, there exists unique morphism $\sigma:T\rightarrow E$ such that $\epsilon \circ \sigma = \tau$. If equalizer existed then it is unique up to isomorphism. For example, in the category of sets, which is denoted by $\textbf{SET}$, the equalizer of two morphisms $\alpha,\beta:X\rightarrow Y$ is the set $E=\{x\in X | \alpha(x)=\beta(x)\}$ together with the inclusion map  $\epsilon:E\hookrightarrow X$
\medskip
\subsection{supermanifolds}\label{subsection2}
By a super ringed space, we mean a pair $(X, \cO_X)$ where $X$ is a topological space and $\mathcal O_X$ is a sheaf of supercommutative $\mathbb Z_2$-
graded rings on X. A morphism between 
$(X, \cO_X)$ and
$(X, \cO_Y)$ is a pair $\psi:=(\overline {\psi},\psi^*)$ such that $\overline {\psi}:X\rightarrow Y$ is a continuous map and $\psi^*:\mathcal O_Y\rightarrow \mathcal {\overline {\psi}}_*\mathcal O_X$ is a
homomorphism between the sheaves of supercommutative $\mathbb Z_2$-graded
rings. Let $U\subset R^{m}$, the super ring space $U^{m|n}:=\big(U,C^\infty_U\otimes \wedge \mathbb R^n\big)$ is called a superdomain where $C^\infty_U$ is the sheaf of smooth functions on $U$.

A supermanifold of dimension $m|n$ is a super ringed space $(\overline M,\cO_M)$ that is locally isomorphic to $\mathbb R^{m|n}$ and $\overline M$ is a second countable and Hausdorff topological space. A morphism between two supermanifolds $M=(\overline M,\cO_M)$ and $N=(\overline N,\cO_N)$ is just a morphism between two super ringed spaces.
For each supermanifold $M$, one can show that there exists a smooth manifold associated to $M$ which is denoted by $\widetilde{M}$ and is called reduced manifold.

\medskip

The category of supermanifolds, $\textbf{SM}$, is a locally small category and it has finite product property. Also, it has a terminal object $\mathbb{R}^{0|0}$, that is the constant sheaf $\mathbb{R}$ on a singleton $\{0\}$.

\medskip

Let $M=(\overline M,\cO_M)$ be a supermanifold and $p\in \overline M$. There exists the map $j_p=(\overline {j_p},{j_p}^*)$ as follows:
$$\begin{matrix} \overline {j_p}:\{0\} \rightarrow \overline M\qquad\qquad 
& {j_p}^*:\mathcal{O}_M\rightarrow \mathbb{R}, \\
& \qquad\qquad\qquad\quad g \mapsto \tilde g(p)=:ev_p(g).\end{matrix}$$
So, for each supermanifold $T$, one can define the morphism
\begin{align} \hat{p}_{_{T}}:T\rightarrow \mathbb{R}^{0|0}\xrightarrow{j_p}M\label{phat},\end{align}
as a composition of $j_p$ and the unique morphism $T\rightarrow \mathbb{R}^{0|0}$.

\medskip

\begin{definition}
A super Lie group $G$ is a supermanifold $G$ together the morphisms  $\mu:G\times G \rightarrow G,\quad i:G \rightarrow G,\quad e:\mathbb{R}^{0|0} \rightarrow G$ say multiplication, inverse and unit morphisms respectively, such that the following equations are satisfied
\begin{align*}
\mu \circ (\mu\times 1_{_{_G}})&=\mu \circ (1_{_{_G}}\times \mu)\\
\mu \circ (1_{_{_G}}\times \hat{e}_{_{_G}})\circ \bigtriangleup_{_{_G}} &=1_{_{_G}}=\mu \circ (\hat{e}_{_{_G}}\times 1_{_{_G}})\circ \bigtriangleup_{_{_G}}\\
\mu \circ (1_{_{_G}}\times i)\circ \bigtriangleup_{_{_G}}&=\hat{e}_{_{_G}}=\mu \circ (i\times 1_{_{_G}})\circ \bigtriangleup_{_{_G}}
\end{align*}
where
 $1_{_G}$ is identity on $G$ and $\hat{e}_{_G}$ is the morphism according to (\ref{phat}) for element $e\in \overline{G}$ and also $\bigtriangleup_{_{_G}}$ be the diagonal map on $G$.
\end{definition}

\medskip

Let $G$ be a super Lie group, then the reduced manifold $\widetilde{G}$ is a Lie group with multiplication, inverse and unit morphisms induced by $\mu$, $i$ and $e$ in above definition. Also, every super Lie group $G$ induces a group structure over its $T$-points for any arbitrary supermanifold $T$. Then, the functor $T\rightarrow G(T)$ takes values in category of groups. In addition, if $S$ is another supermanifold and $T\rightarrow S$ is a morphism, then the corresponding map $G(S)\rightarrow G(T)$ is a homomorphism of groups. One can also define a super Lie group as a representable functor $T\rightarrow G(T)$ from category $\textbf{SM}$ to category of groups. If such functor represented by a supermanifold $G$, then the maps $\mu,i,e$ are obtained by Yoneda's lemma and the maps $\mu_{_T}, i_{_T}$ and $e_{_T}$.\\


For example, let $V$ be a finite dimensional super vector space of dimension $m|n$ and let $\{R_1,\cdots,R_{m+n}\}$ be a  basis of V for which the first $m$ elements are even and the last $n$ elements are odd.
Consider the functor 
\begin{align*}
F: & \textbf{SM} \rightarrow \textbf{Grp}\\
& T\mapsto Aut_{\mathcal O(T)}(\mathcal O(T)\otimes V),\end{align*}
where $F$ maps each seupermanifold $T$ to the group of even $\mathcal O(T)$-module automorphisms of $\mathcal O(T)\otimes V$, and $\textbf{Grp}$ is the category of groups. Consider the supermanifold $\textbf{End}(V)=\Big(End(V_{0})\times End(V_{1}),\mathcal{A}\Big)$ where $\mathcal{A}$ is the sheaf $C^{\infty}_{\mathbb{R}^{m^2+ n^2}}\otimes \wedge{\mathbb{R}^{2mn}}$. Let 
$F_{ij}$ is a linear transformation on $V$ with $R_k\mapsto \delta_{ik}R_j$. If $\{f_{ij}\}$ is the corresponding dual basis, then it may be considered as a global coordinates on $\textbf{End}(V)$. Let $X$ be the open subsupermanifold of $\textbf{End}(V)$ corresponding with the open set:
$$\overline X=GL(V_{0})\times GL(V_{1})\subset End(V_{0})\times End(V_{1}).$$
Thus, we have
$$X=\Big(GL(V_{0})\times GL(V_{1}),\mathcal{A}|_{GL(V_{0})\times GL(V_{1})}\Big).$$
It can be shown that the functor $F$ may be represented by $X$. For this, one may show that $Hom(T,X)\cong Aut_{\mathcal O(T)}(\mathcal O(T)\otimes V)$. To this end, first, note that
$$Hom(T,X)=Hom(\mathcal A(X),\mathcal O(T)).$$
 It is known that each $\psi\in Hom(\mathcal A(X),\mathcal O(T))$ may be uniquely determined by $\{g_{ij}\}$ where $g_{ij}=\psi(f_{ij})$, see \cite{vsv}.
 Now set $\Psi(R_j):=\Sigma g_{ij}R_i$.
 One may consider $\Psi$ as an element of $Aut_{\mathcal{O}(T)}(\mathcal{O}(T)\otimes V)$. Obviously $\psi\mapsto\Psi$ is a bijection from $Hom(T,X)$ to $Aut_{\mathcal{O}(T)}(\mathcal{O}(T)\otimes V)$.
Thus the supermanifold $X$ is a super Lie group and denoted it by $GL(V)$ or $GL(m|n)$ if $V=\mathbb{R}^{m|n}$.
Therefore $T$- points of $GL(m|n)$ are the invertible even $m|n\times m|n$ supermatrices $\begin{pmatrix}
A & B\\ C & D \end{pmatrix}$ such that
$a_{ij}, d_{kl}\in \mathcal O(T)_0,\quad b_{il},c_{kj}\in \mathcal O(T)_1 $ and the multiplication can be written as the matrix product.

There exists, up to isomorphism, a unique super Lie algebra, say $\mathfrak g$, associated to super Lie group $G$. Then one may write $\mathfrak g=Lie(G)$. The super Lie algebra of $G$ may be thought of all right (or left) invariant vector fields on $G$.   

\begin{definition}
Let $M$ be a supermanifold and let $G$ be a super Lie group with $\mu,i$ and $e$ as its multiplication, inverse and unit morphisms respectively. A morphism $a:M\times G\rightarrow M$ is called a (right) action of $G$ on $M$, if
$$\begin{matrix} a\circ(1_M\times\mu)=a\circ(a\times 1_{_G}),&\qquad\qquad & a\circ(1_M\times\hat{e}_{_M})\circ \Delta_M=1_M. \end{matrix}$$
where $\hat{e}_{_M}, \Delta_M$ are as above. In this case, we say G acts from right on $M$. One can define left action analogously. 
\end{definition}
By Yoneda lemma, one may consider, equivalently, the action of G as a natural transformation: 
$$a(.):M(.)\times G(.)\rightarrow M(.),$$
such that for each supermanifold $T$, the morphism $a_{_T}: M(T)\times G(T)\rightarrow  M(T)$ is an action of group $G(T)$ on the set $M(T)$. This means for each $\QQ_1,\QQ_2\in G(T)$ and $\PP\in M(T)$ one has
\begin{enumerate}
\item[1.] $(\PP.\QQ_1).\QQ_2=\PP.(\QQ_1\QQ_2).$
\item[2.] $\PP.\hat{e}_{_T}=\PP.$
\end{enumerate}

Let  $p\in \overline M$, define  
\begin{align*} & a_p:G\rightarrow M, \qquad\qquad\qquad\qquad\qquad a^g:M\rightarrow M,\\ & a_p:=a\circ (\hat{p}_{_G}\times 1_{_G})\circ \Delta_{_G}, \qquad\qquad a^g:=a\circ (1_M \times \hat{g}_{_M})\circ \Delta_M, \end{align*}
where $\hat{p}_{_G}$ and $\hat{g}_{_M}$ are the morphism (\ref{phat}) for $p\in \overline M$ and $g\in \overline{G}$ respectively. Equivalently, these maps may be defined as
\begin{align*} (a_p)_{_T}:& G(T)\rightarrow M(T), \qquad\qquad\qquad (a^g)_{_T}:M(T)\rightarrow M(T),\\ & \QQ\longmapsto \hat{p}_{_T}.\QQ, \qquad\qquad\qquad\qquad\qquad\quad \PP\longmapsto \PP.\hat{g}_{_T}. \end{align*}
Before next definition, we recall
that a morphism between supermanifolds, say $\psi:M\rightarrow N$ is a submersion at $p\in\overline M$, if $(d\psi)_p$ is surjective and $\psi$ is called submersion, if this happens at each point. (For more detail about this, One can refer to \cite{carmelibook}, \cite{vsv}). $\psi$ is a \textit{surjective submersion}, if in addition $\overline{\psi}$ is surjective. Let $G$ acts on $M$ with action $a:M\times G \rightarrow M$. We say that $a$ is transitive, if there exist $p\in \overline M$ such that $a_p$ is a surjective submersion.  

It is shown that, if $a_p$ be a submersion for one $p\in \overline M$, then it is a submersion for all point in $\overline M$.
The following proposition will be required in the last section.
\begin{proposition}\label{Transitive}
Let $a:M\times G \rightarrow M$ be an action. Then $a$ is transitive if and only if $(a_p)_{\mathbb{R}^{0|q}}:G(\mathbb{R}^{0|q})\rightarrow M(\mathbb{R}^{0|q})$ is surjective, where q is the odd dimension of $G.$
\end{proposition}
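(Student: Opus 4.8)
The plan is to prove the two implications separately, using throughout the identification $(a_p)_{_T}(\QQ)=a_p\circ\QQ$ for $\QQ\in G(T)$, which is immediate from the definition $a_p=a\circ(\hat p_{_G}\times 1_G)\circ\Delta_{_G}$ and the group law on $T$-points (naturality of $\Delta$ and of $\hat p$).

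For the direction $(\Rightarrow)$: assume $a$ is transitive, so $a_p\colon G\to M$ is a surjective submersion for a suitable $p\in\overline M$. By the local normal form of a submersion (see \cite{carmelibook}, \cite{vsv}), $a_p$ admits, through each point of $\overline G$, a local section; hence over every point of $\overline M$ there are an open $V\subseteq\overline M$ and a morphism $s\colon(V,\cO_M|_V)\to G$ with $a_p\circ s$ the inclusion $\iota_V$. Given $\PP\in M(\mathbb R^{0|q})$, the image of $\overline\PP$ is a single point $m\in\overline M$; pick such a $V\ni m$ and section $s$. Since $\mathbb R^{0|q}$ has a single topological point, lying in $V$, the morphism $\PP$ factors as $\PP=\iota_V\circ\PP'$ with $\PP'\colon\mathbb R^{0|q}\to V$, and then $s\circ\PP'\in G(\mathbb R^{0|q})$ satisfies $(a_p)_{\mathbb R^{0|q}}(s\circ\PP')=a_p\circ s\circ\PP'=\PP$. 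Thus $(a_p)_{\mathbb R^{0|q}}$ is onto (in fact for every $q$).

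For the direction $(\Leftarrow)$: assume there is $p\in\overline M$ with $(a_p)_{\mathbb R^{0|q}}$ surjective, $q$ the odd dimension of $G$; I will show $a_p$ is a surjective submersion, i.e.\ that $a$ is transitive. First, $\overline{a_p}$ is onto: for $m\in\overline M$ apply the hypothesis to $\hat m_{\mathbb R^{0|q}}\in M(\mathbb R^{0|q})$ of (\ref{phat}) to get $\QQ$ with $a_p\circ\QQ=\hat m_{\mathbb R^{0|q}}$, then pass to reduced maps. Next, $a_p$ is a submersion. The action axioms give $a_p\circ\mu=a\circ(a_p\times 1_G)$, hence for each $g\in\overline G$ one has $a_p\circ R_g=a^g\circ a_p$, where $R_g:=\mu\circ(1_G\times\hat g_{_G})\circ\Delta_{_G}$ is right translation and $a^g$ is the automorphism of $M$; since $R_g$ and $a^g$ are isomorphisms, $(da_p)_g$ is onto for one $g\in\overline G$ iff for all, and $\overline{a_p}$ has constant rank. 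Being also onto, $\overline{a_p}$ is a submersion of manifolds, so the even part of every $(da_p)_g$ is onto. For the odd part, choose a chart of $M$ centred at $p$ with odd coordinates $\xi_1,\dots,\xi_n$, $n$ the odd dimension of $M$ (here necessarily $n\le q$), and let $\PP_0\in M(\mathbb R^{0|q})$ have reduced map $p$, vanish on all even coordinates, and send $\xi_j\mapsto\theta_j$ for $j\le n$, where $\theta_1,\dots,\theta_q$ are the odd coordinates of $\mathbb R^{0|q}$. Writing $\PP_0=a_p\circ\QQ_0$ with $g_0:=\overline{\QQ_0}(\{0\})$ and expanding $\QQ_0^*$ to first order in the $\theta_j$ yields odd tangent vectors $X_1,\dots,X_q\in T_{g_0}G$; comparing the $\theta_j$-linear parts in $\PP_0^*=\QQ_0^*\circ a_p^*$ gives $(da_p)_{g_0}(X_j)=\partial/\partial\xi_j|_p$ for $j\le n$, a basis of $(T_pM)_1$. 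Hence $(da_p)_{g_0}$ is onto, so $a_p$ is a submersion; together with surjectivity of $\overline{a_p}$, $a_p$ is a surjective submersion, i.e.\ $a$ is transitive.

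The main obstacle is the odd part of the submersion claim in $(\Leftarrow)$: one needs a single $\mathbb R^{0|q}$-point of $M$ whose first-order odd jet already spans all of $(T_pM)_1$, and this is exactly why the number of odd probe-parameters must be at least the odd dimension of $G$ — with fewer one only learns that various subspaces of $(T_pM)_1$, depending on the chosen point of the reduced stabilizer, lie in the image of $\mathfrak g\to T_pM$, and these need not exhaust $(T_pM)_1$. The remaining points are routine: that the $\theta_j$-linear coefficients of $\QQ_0^*$ define elements of $T_{g_0}G$, that jet-comparison through $a_p^*$ computes $(da_p)_{g_0}$, the constant-rank statement for $\overline{a_p}$, and the identification $(a_p)_{_T}(\QQ)=a_p\circ\QQ$ used throughout.
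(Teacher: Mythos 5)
Your argument reconstructs, essentially step by step, the proof of Proposition 9.1.4 of \cite{carmelibook}, which is all the paper itself offers here (its ``proof'' is a citation to that reference). The $(\Rightarrow)$ direction via local sections of the surjective submersion $a_p$ and the single topological point of $\mathbb{R}^{0|q}$, and, in $(\Leftarrow)$, the surjectivity of $\overline{a_p}$, the constant-rank argument from the equivariance $a_p\circ R_g=a^g\circ a_p$, the deduction of the even part from constant rank plus surjectivity, and the first-order jet computation identifying the $\theta$-linear coefficients of a preimage $\QQ_0$ of $\PP_0$ with odd tangent vectors at $g_0$ mapping onto a basis of $(T_pM)_1$ --- all of this is correct and is the standard route.

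The one genuine gap is the parenthetical claim ``(here necessarily $n\le q$)''. Nothing in the hypotheses forces the odd dimension $n$ of $M$ to be at most the odd dimension $q$ of $G$, and your target point $\PP_0$ (sending $\xi_j\mapsto\theta_j$ for $j\le n$) cannot be written down when $n>q$. In fact when $n>q$ the implication $(\Leftarrow)$ itself fails: take $G$ purely even (so $q=0$) acting on $M=\widetilde M\times\mathbb{R}^{0|1}$ through a transitive action on the factor $\widetilde M$; then $(a_p)_{\mathbb{R}^{0|0}}$ is just $\overline{a_p}$ on reduced points, hence surjective, yet $a_p$ is not a submersion because the odd tangent space of $G$ is zero while that of $M$ is not. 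So $q\ge n$ is a hidden hypothesis of the statement --- harmless in this paper, where $G=GL(m|n)$ has odd dimension $2mn\ge l(m-k)+k(n-l)$, but your proof must either assume it explicitly or note that it is precisely what makes $\PP_0$ exist. Relatedly, your closing remark has the roles reversed: the number of odd probe parameters must be at least the odd dimension of $M$, not of $G$; the choice of $q$ equal to the odd dimension of $G$ in the statement is adequate only because a transitive action forces the odd dimension of $G$ to dominate that of $M$ in the first place.
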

\begin{proof} See the proof of proposition 9.1.4 in \cite{carmelibook}. \end{proof}
\begin{definition}
Let $G$ be a super Lie group and let $a$ be an action of $G$ on supermanifold $M$. By \textit{stabilizer} of $p\in \overline M$, we mean a supermanifold $G_p$ equalizing the diagram $$G\overset{a_p}{\underset{\hat{p}_{_G}} {\rightrightarrows}}M.$$
 \end{definition}
It is not clear that such an equalizer exists. In this regard, there are two propositions as follow
\begin{proposition}\label{Isotropic}
Let $a:M\times G \rightarrow M$ be an action, then
\begin{enumerate} 
\item[1.] The  following diagram admits an equalizer $G_p$ 
$$G\overset{a_p}{\underset{\hat{p}_{_G}} {\rightrightarrows}}M.$$
\item[2.] $G_p$ is a sub super Lie group of $G$.
\item[3.] The functor $T\rightarrow (G(T))_{\hat{p}_{_T}}$ is represented by $G_p$, where $(G(T))_{\hat{p}_{_T}}$ is the stabilizer of $\hat{p}_{_T}$ for the action of $G(T)$ on $M(T)$.
\end{enumerate} \end{proposition}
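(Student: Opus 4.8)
The plan is to construct the equalizer $G_p$ explicitly as a closed sub-supermanifold of $G$, verify the universal property by passing to the functor of points, and only then check the sub-super-Lie-group structure. First I would pass through the Yoneda embedding: by the description of actions via natural transformations recalled above, for each supermanifold $T$ one has the two maps $(a_p)_T, (\hat p_G)_T : G(T) \to M(T)$, and in the category of sets the equalizer of these is the honest subset $(G(T))_{\hat p_T} = \{\QQ \in G(T) : \hat p_T . \QQ = \hat p_T\}$, the stabilizer of $\hat p_T$ under the $G(T)$-action on $M(T)$. So statement (3) is essentially forced once (1) is known, because the functor $T \mapsto (G(T))_{\hat p_T}$ is by construction the equalizer of $a_p(.)$ and $\hat p_G(.)$ in $[\textbf{SM},\textbf{SET}]$, and a representable equalizer in the functor category comes from an equalizer in $\textbf{SM}$ by full faithfulness of Yoneda. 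Thus the crux is (1): producing a supermanifold representing that functor.

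For (1) I would use the submersion hypothesis together with the standard preimage-of-a-point construction. Since $a_p : G \to M$ is a submersion (it is one at some point by transitivity-type arguments, but in fact we only need it to be a submersion, which holds at every point once it holds at one, as recalled just before the statement — and here the relevant point is the image of $e$), the diagonal-type morphism
$$ (a_p, \hat p_G) : G \longrightarrow M \times M $$
is transverse to the diagonal $\Delta_M \hookrightarrow M\times M$; equivalently $a_p$ is transverse to the constant morphism $\hat p_M$. Transversality guarantees that the fibered product $G \times_{M\times M} \Delta_M$ exists in $\textbf{SM}$ as a closed sub-supermanifold, and this fibered product is exactly the equalizer of $a_p$ and $\hat p_G$. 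Concretely, near $\bar e$ one chooses submersion coordinates in which $a_p$ becomes a coordinate projection, and $G_p$ is cut out by setting the appropriate coordinates equal to the constants describing $\hat p_G$; one then checks this local model is independent of choices and glues. The reduced space $\overline{G_p}$ is the ordinary stabilizer $\{g : \bar a_p(g) = p\}$, a closed Lie subgroup of $\widetilde G$.

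For (2), once $G_p$ is in hand as a sub-supermanifold, I would verify it is closed under $\mu$ and $i$ by the functor-of-points criterion: for every $T$, $(G_p)(T) = (G(T))_{\hat p_T}$ is a subgroup of $G(T)$ (stabilizers of a group action are subgroups), so the restrictions of $\mu$, $i$, $e$ factor through $G_p$ by Yoneda; hence $G_p$ inherits a super Lie group structure and the inclusion is a super Lie group morphism. The main obstacle is really step (1): establishing that the naive equalizer carries a manifold structure, i.e. that the submersion hypothesis yields the needed transversality and that the local submersion normal form can be used to exhibit $G_p$ as a genuine closed sub-supermanifold with the correct functor of points. Everything after that — the group structure and the representability statement — follows formally from Yoneda and the fact that set-theoretic stabilizers are subgroups, so I would keep those parts brief and concentrate the argument on the transversality and local-coordinate analysis around the identity.
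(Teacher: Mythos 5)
The paper itself does not prove this proposition; it simply cites Proposition 8.4.7 of \cite{carmelibook}, so your proposal has to be measured against the standard argument given there. The formal parts of your plan are sound: the Yoneda embedding preserves whatever limits exist, so once an equalizer $G_p$ is produced in $\textbf{SM}$ its functor of points is automatically the set-theoretic equalizer $T\mapsto (G(T))_{\hat p_{_T}}$, which gives (3); and (2) then follows because stabilizers of a group action are subgroups and products of representables are computed pointwise.

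The genuine gap is in your construction for (1). You build $G_p$ as a transverse preimage, and for this you assert that $a_p:G\to M$ is a submersion. But the proposition is stated for an \emph{arbitrary} action, and in this paper ``$a_p$ is a surjective submersion'' is essentially the definition of transitivity; for a non-transitive action $a_p$ is in general not a submersion at any point. Take $a$ to be the trivial action: then $a_p=\hat p_{_G}$ is constant, so $(da_p)$ is nowhere surjective when $\dim M>0$, yet the stabilizer $G_p=G$ certainly exists and the proposition must still hold. So the transversality hypothesis you invoke simply fails, and the step ``choose submersion coordinates near $\bar e$'' has nothing to latch onto. The standard repair --- and the route taken in the cited reference --- is to observe that $a_p$ has \emph{constant rank}: for a right action one has $a_p\circ R_g=a^g\circ a_p$ for every $g\in\overline G$, where $R_g$ is right translation on $G$ and $a^g$ is the diffeomorphism of $M$ induced by $g$, so $(da_p)$ has the same rank at every point of $G$. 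One then applies the constant rank theorem for supermanifolds to conclude that the fibre of $a_p$ over $p$ is a closed embedded sub-supermanifold of $G$ equalizing the diagram. With that substitution the remainder of your argument (the reduced space being the classical stabilizer, and the Yoneda bookkeeping for (2) and (3)) goes through as you describe.
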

\begin{proof} See the proof of proposition 8.4.7 in \cite{carmelibook}. \end{proof}
 \begin{proposition}\label{equivariant}
 Suppose $G$ acts transitively on $M$. Then there exists a $G$-equivariant isomorphism 
 \begin{displaymath}\xymatrix{ \dfrac{G}{G_p}\ar[r]^{\cong} & M. } \end{displaymath} \end{proposition}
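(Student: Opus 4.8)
The plan is to realize the claimed isomorphism as the morphism through which the orbit map $a_p\colon G\to M$ factors along the canonical projection $\pi\colon G\to G/G_p$, and then to verify that this factored morphism $\phi\colon G/G_p\to M$ is a $G$-equivariant isomorphism. By Proposition~\ref{Isotropic}, the stabilizer $G_p$ is a (closed) sub super Lie group of $G$, so the homogeneous space $G/G_p$ exists as a supermanifold carrying a canonical $G$-action together with a surjective submersion $\pi\colon G\to G/G_p$ that is $G$-equivariant; moreover $\pi$ has the universal property of a quotient, namely a morphism out of $G$ factors uniquely through $\pi$ precisely when it is invariant under translation by $G_p$ (see \cite{carmelibook}).

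First I would check this invariance for $a_p$. Passing to $T$-points, Proposition~\ref{Isotropic}(3) identifies $G_p(T)$ with the stabilizer $(G(T))_{\hat{p}_{_T}}$ of $\hat{p}_{_T}\in M(T)$, so for $h\in G_p(T)$ and $\QQ\in G(T)$ one has $\hat{p}_{_T}.h=\hat{p}_{_T}$, and hence $a_p(T)$ is constant on the cosets of $\QQ$ by $G_p(T)$ (on the side dictated by the right action $a$; we keep the notation $G/G_p$ of the statement). By Yoneda's lemma and the universal property of $\pi$, the morphism $a_p$ therefore factors uniquely as $a_p=\phi\circ\pi$ for a morphism $\phi\colon G/G_p\to M$. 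The same $T$-point bookkeeping, applied now to $a$ and to the $G$-action on $G/G_p$, shows that $\phi$ intertwines the two $G$-actions, i.e. $\phi$ is $G$-equivariant.

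The substantive step is to prove $\phi$ is an isomorphism. It is a submersion: since $a$ is transitive, $a_p=\phi\circ\pi$ is a submersion, and $\pi$ is a surjective submersion, so the differential of $\phi$ is surjective at every point. It is a monomorphism: by Proposition~\ref{Isotropic}(3) the functor $T\mapsto(G(T))_{\hat{p}_{_T}}$ is represented by $G_p$, so for each $T$ the map $a_p(T)$ descends to an injection $G(T)/G_p(T)\hookrightarrow M(T)$, whence $\phi(T)$ is injective. For the dimension, the equalizer defining $G_p$ exhibits it as the fibre $a_p^{-1}(p)$, which, $a_p$ being a submersion, has dimension $\dim G-\dim M$; therefore $\dim(G/G_p)=\dim G-\dim G_p=\dim M$. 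A submersion between supermanifolds of equal (graded) dimension is a local isomorphism, and a local isomorphism whose underlying continuous map is injective (which holds here, $\phi$ being a monomorphism) is an open immersion. Finally, transitivity of $a$ makes $\overline{a_p}$, hence $\overline{\phi}$, surjective; an open immersion that is surjective on underlying points is an isomorphism. Thus $\phi\colon G/G_p\to M$ is the desired $G$-equivariant isomorphism.

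I expect the main obstacle to lie in the part packaged as ``$G/G_p$ exists and $\pi$ is a categorical quotient'': putting the supermanifold structure on $G/G_p$ (for instance by slicing $G$ transversally to the $G_p$-orbit through $e$ and translating) and, above all, checking that $\phi$, which one produces on $T$-points, is represented by a genuine morphism of supermanifolds rather than merely a natural transformation. Once that, together with the facts used above (equalizers and the stabilizer functor from Proposition~\ref{Isotropic}, transitivity via Proposition~\ref{Transitive}, and ``local isomorphism $+$ injective on points $\Rightarrow$ open immersion''), is granted, the remaining verifications are routine; the one point to handle with care is to use the coset convention matching the right action $a$.
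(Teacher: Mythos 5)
Your argument is correct and is essentially the proof the paper points to: the paper gives no proof of its own but cites Proposition 6.5 of \cite{g-spaces}, which proceeds exactly as you do, factoring $a_p$ through the quotient $G/G_p$ and showing the induced $G$-equivariant map is a submersion between supermanifolds of equal dimension that is bijective on underlying points, hence an isomorphism. The one step to state more carefully is the injectivity of $\phi(T)$ for arbitrary $T$, since $(G/G_p)(T)$ is in general strictly larger than $G(T)/G_p(T)$ (one must lift $T$-points of the quotient locally along the surjective submersion $\pi$ and invoke the sheaf property of the functor of points); but for your concluding ``local isomorphism plus injective on points'' step only injectivity of $\overline{\phi}$ is needed, which is the classical statement at $T=\mathbb{R}^{0|0}$.
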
 
 \begin{proof} See the proof of proposition 6.5 in \cite{g-spaces}. \end{proof}
 \subsection{Super Harish-Chandra pairs}
 In this section, we recall \textit{super Harish-Chandra pairs} and their morphisms as a category equivalent to the category of super Lie groups.
\begin{definition}
Let $G_0$ be a Lie group and let $\mathfrak g=\mathfrak g_0\oplus\mathfrak g_1$ be a super Lie algebra such that
 \begin{enumerate}
  \item[(a)]  $\mathfrak g_0\cong Lie(G_0)$
  \item[(b)]  $G_0$ acts on $\mathfrak g$ via a representation $\sigma$ such that $\sigma(G)_{|_{\mathfrak g_0}}=Ad$ and the differential
   of $\sigma$ acts on $\mathfrak g$ as the adjoint representation, that is,
  $$d\sigma(X)Y=[X,Y].$$
  \end{enumerate}
Then the triple $(G_0,\mathfrak g,\sigma)$ or pair $(G_0,\mathfrak g)$ is called a super Harish-Chandra pair.
 \end{definition}
 
 A morphism between two super Harish-Chandra pairs is a pair of morphisms $(\psi_0,\rho^{\psi})$ that preserves the structure of super Harish- Chandra pair. More precisely,

\begin{definition}
 Let  $(G_{0},\mathfrak{g},\sigma)$ and $(H_{0},\mathfrak{h},\tau)$ be two super Harish-Chandra pairs. A morphism between them
 is a pair $(\psi_{0},\rho^{\psi})$ such that
 \begin{enumerate}
   \item[(1)]  $\psi_{0}:G_{0}\rightarrow H_{0}$ is a Lie group homomorphism. 
   \item[(2)]  $\rho^{\psi}:\mathfrak{g} \rightarrow \mathfrak{h}$ is a super Lie algebra homomorphism.
   \item[(3)]   $\psi_{0}$ and $\rho^{\psi}$ are compatible, i.e.
        $$\rho^{\psi} \mid_{\mathfrak{g}_{0}}=d\psi_{0}.$$
   \item [4)]   For each $x\in G_{0},$ 
   $$\rho^{\psi}\circ \sigma(x)=\tau(\psi_{0}(x))\circ \rho^{\psi}.$$
   \end{enumerate}
\end{definition}

 For example, let $G$ be a super Lie group, the pair $(\widetilde{G},\mathfrak g)$ given by the reduced Lie group $\widetilde{G}$ of $G$ and the associated super Lie algebra $\mathfrak g$ is a super Harish-Chandra pair with respect to the
 adjoint action of $\widetilde{G}$ on $\mathfrak g$ of $G$.
One can show that the super Harish-Chandra pairs with their morphisms form a category that is denoted by $\textbf{Shcp}.$ Also, the functor
  \begin{align*}
     F:  \textbf{SGrp}  & \longrightarrow  \textbf{Shcp}
     \\
           G & \longmapsto (\widetilde{G},Lie(G),Ad)
          \\
           \psi & \longmapsto (\widetilde{\psi},d\psi)
      \end{align*}
 is bijective. See \cite{carmelibook} for more details.
 
The following definition defines an action of a super Harish-Chandra pair on a supermanifold. 
 \begin{definition}
 We say that a super Harish-Chandra pair $(G_0,\mathfrak g)$ acts on a supermanifold $M$ if there is a pair $(\underline{a},\rho)$ such that the following holds:
 \begin{enumerate}
 \item[(1)]
 $\underline{a}:M \times G_0  \rightarrow M$ is an action of $G_0$  on $M.$ 
 \item[(2)]
 $\rho:\mathfrak g\rightarrow \textbf{Vec}(M)$ is a super Lie algebra morphism such that
 \begin{enumerate}
 \item[(a)]
 $\rho|_{\mathfrak g_0}(X)\simeq (1_{\mathcal O(M)}\otimes X)\underline{a}^*,\qquad \forall X\in \mathfrak g_0,$
 \item[(b)]
 $\rho(g.Y)=(\underline{a}^{g^{-1}})^*\rho(Y)(\underline{a}^{g})^*,\qquad \forall g\in\overline{G},Y\in \mathfrak g,$
 \end{enumerate} \end{enumerate}
 where $\textbf{Vec}(M)$ is the super Lie algebra including all vector fields on $M.$
 \end{definition}
 \begin{proposition}\label{prop1}
 Let $a:M \times G\rightarrow M$ be an action of the super Lie group $G$ 
 on the supermanifold $M$ , then the morphisms
 \begin{enumerate}
 \item[(1)]
 $\underline{a}:M\times \widetilde{G} \rightarrow M,$ with
 $\underline{a}=a\circ (1_M \times j),$ where
 $j:\widetilde{G}\rightarrow G$
 is the natural
 immersion of the reduced Lie group $\widetilde G$ into $G$,
 \item[(2)]
 $\rho_a:\mathfrak g\rightarrow \textbf{Vec}(M)$, $Y\mapsto (1_{\mathcal O(M)}\otimes Y)a^*$
 \end{enumerate}
 define an action of the associated super Harish-Chandra pair $(\widetilde{G},\mathfrak g)$ on M .
 \end{proposition}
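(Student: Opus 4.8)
The plan is to verify, in turn, the three defining conditions for the pair $(\underline a,\rho_a)$ to be an action of the super Harish-Chandra pair $(\widetilde G,\mathfrak g)$ on $M$: condition (1), then the two halves of condition (2), then (2a) and (2b). The only inputs needed are the two action axioms for $a$, namely $a\circ(1_M\times\mu)=a\circ(a\times 1_{_G})$ and $a\circ(1_M\times\hat e_M)\circ\Delta_M=1_M$, the fact that $j\colon\widetilde G\to G$ is a Lie group homomorphism (the group laws of $\widetilde G$ being by construction those induced from $G$), and the functoriality of the pullback $(\,\cdot\,)^{*}$. The whole argument is the super analogue of the classical construction of the fundamental vector fields of a Lie group action; the one point where genuine care is needed is the bookkeeping of Koszul signs and of the (completed) tensor products $\cO(M)\otimes\cO(G)$.

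\emph{Condition (1).} Since $j\circ\mu_{\widetilde G}=\mu\circ(j\times j)$ and $j\circ\hat e^{\widetilde G}_M=\hat e_M$, substituting $\underline a=a\circ(1_M\times j)$ into $\underline a\circ(1_M\times\mu_{\widetilde G})$ and into $\underline a\circ(\underline a\times 1_{\widetilde G})$ and using the associativity axiom for $a$ reduces both to $a\circ(a\times 1_{_G})\circ(1_M\times j\times j)$; likewise $\underline a\circ(1_M\times\hat e^{\widetilde G}_M)\circ\Delta_M=a\circ(1_M\times\hat e_M)\circ\Delta_M=1_M$. This is a routine diagram chase.

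\emph{Condition (2): $\rho_a$ is a morphism into $\textbf{Vec}(M)$.} Dualizing, $a^{*}\colon\cO(M)\to\cO(M)\otimes\cO(G)$ is a homomorphism of superalgebras, the unit axiom becomes $(1_{\cO(M)}\otimes ev_e)\circ a^{*}=1_{\cO(M)}$, and the associativity axiom becomes the coassociativity relation $(a^{*}\otimes 1_{\cO(G)})\circ a^{*}=(1_{\cO(M)}\otimes\mu^{*})\circ a^{*}$. Reading $\rho_a(Y)=(1_{\cO(M)}\otimes Y)\circ a^{*}$ with $Y$ regarded as a tangent vector at $e$, i.e.\ a graded derivation of $\cO(G)$ over $ev_e$, the Leibniz rule for $Y$ together with $(1\otimes ev_e)a^{*}=1$ shows that $\rho_a(Y)$ is a graded derivation of $\cO(M)$, so $\rho_a$ lands in $\textbf{Vec}(M)$; linearity is clear. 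For the bracket one first checks, using coassociativity, that $a^{*}\circ\rho_a(Y)=(1\otimes Y^{L})\circ a^{*}$, where $Y^{L}$ is the invariant vector field extending $Y$; iterating this gives $\rho_a(Y_1)\rho_a(Y_2)=(1\otimes(Y_1\circ Y^{L}_2))\circ a^{*}$, and on forming the graded commutator the surviving term is $(1\otimes(ev_e\circ[Y^{L}_1,Y^{L}_2]))\circ a^{*}=\rho_a([Y_1,Y_2])$. With the sign conventions of Section~\ref{prelim} this is a morphism, not merely an anti-morphism, of super Lie algebras.

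\emph{Conditions (2a) and (2b).} For (2a): $\underline a^{*}=(1_{\cO(M)}\otimes j^{*})\circ a^{*}$, and for $X\in\mathfrak g_0$ the functional $X\circ j^{*}$ is exactly $X$ viewed in $\mathfrak g$ (both are the derivation $dj(X)$ at $e_G$ over $ev_e$); hence $(1\otimes X)\underline a^{*}=(1\otimes(X\circ j^{*}))a^{*}=\rho_a(X)$, which is the asserted identification. For (2b): first, $\underline a^{g}=a^{g}$ for $g\in\overline G=\widetilde G$ because $j\circ\hat g^{\widetilde G}_M=\hat g_M$. Writing $Ad(g)Y=Y\circ c_g^{*}$ with $c_g\colon x\mapsto gxg^{-1}$ the conjugation of $G$, one dualizes the identity $a\circ(1_M\times c_g)=a^{g^{-1}}\circ a\circ(a^{g}\times 1_{_G})$ — itself a consequence of the associativity of $a$ and the definition of $a^{g}$ — and substitutes it into $\rho_a$; after commuting $(a^{g})^{*}$ past the evaluation one obtains the conjugation formula of (2b), the precise placement of $g$ and $g^{-1}$ being governed by the conventions for $Ad$ and $a^{g}$ fixed in Section~\ref{prelim}. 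Since the Harish-Chandra pair attached to $G$ carries the adjoint action, $g\cdot Y=Ad(g)Y$, and the verification is complete. As indicated, the only real obstacle is sign bookkeeping in condition (2); everything else is a formal consequence of the action axioms, and a fuller account of these matters can be found in \cite{carmelibook}.
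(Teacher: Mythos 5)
Your argument is correct and is essentially the standard proof of this proposition from Carmeli--Caston--Fioresi, which is exactly the source the paper defers to here (the paper itself gives no proof beyond the citation): descend the action axioms along the homomorphism $j$, read $Y$ as a derivation over $ev_e$ so that the counit and coassociativity identities for $a^*$ give the derivation property, the bracket compatibility via $a^*\rho_a(Y)=(1\otimes Y^L)a^*$, and the compatibility conditions (2a), (2b) by dualizing the corresponding identities of morphisms. The only looseness is the placement of $g$ versus $g^{-1}$ in (2b), which you correctly flag as a matter of the convention for $Ad$ and $a^g$ under a right action, so there is no genuine gap.
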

 \begin{proposition}\label{prop2}
 Let $(\widetilde{G},\mathfrak g)$ be the super Harish-Chandra pair associated with the super Lie group $G$ and
 let $(\underline{a},\rho)$ be an action of $(\widetilde{G},\mathfrak g)$ on a supermanifold $M$. Then there is a unique action $a_{\rho}:M\times G\rightarrow M$ of the super Lie group $G$ on $M$ whose reduced and infinitesimal actions are $(\underline{a},\rho).$
 \end{proposition}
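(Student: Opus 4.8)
The plan is to reconstruct the super Lie group action $a_\rho$ out of the Harish-Chandra data $(\underline a, \rho)$ by transporting everything through the equivalence $F:\textbf{SGrp}\to\textbf{Shcp}$, and then to prove uniqueness separately. First I would fix, once and for all, the Harish-Chandra pair $(\widetilde G,\mathfrak g)$ associated to $G$ and note that, by Proposition~\ref{prop1}, giving a super Lie group action of $G$ on $M$ produces a Harish-Chandra action $(\underline a,\rho)$; the content of this proposition is that this assignment is surjective (existence) and injective (uniqueness). The cleanest route to existence is to observe that a right action of $G$ on $M$ is the same as a morphism of super Lie groups $G\to \mathrm{Aut}(M)$ — or, more elementarily, to use the functor-of-points description: an action is a natural family of group actions $a_T:M(T)\times G(T)\to M(T)$, and since the functor $T\mapsto G(T)$ is reconstructed from $(\widetilde G,\mathfrak g)$ via the Harish-Chandra equivalence, it suffices to build a compatible natural family of set-actions and invoke Yoneda.

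The key steps, in order, would be: (1) Recall the explicit reconstruction of $G$ from $(\widetilde G,\mathfrak g)$: as a sheaf, $\cO_G \cong \cO_{\widetilde G}\otimes \wedge(\mathfrak g_1^*)$, with multiplication encoded by the enveloping-algebra data $(U(\mathfrak g),\sigma)$; this is the Koszul/Kostant construction used in \cite{carmelibook}. (2) Using $\underline a$ (an ordinary $\widetilde G$-action on $M$) and $\rho:\mathfrak g\to\textbf{Vec}(M)$, define the comorphism $a_\rho^*:\cO(M)\to \cO(M)\widehat\otimes\cO_G$ on an open set by the formula that combines pullback along $\underline a$ with exponentiation of the odd vector fields $\rho(Y)$ for $Y\in\mathfrak g_1$ — concretely, $a_\rho^*(f)=\sum_I \big(\rho(Y_{I})\, \underline a^* f\big)\otimes \xi^I$ in a basis $\{Y_i\}$ of $\mathfrak g_1$ with dual odd coordinates $\xi^i$, mirroring the way $\mu$ itself is written in this model. (3) Verify the action axioms $a_\rho\circ(1_M\times\mu)=a_\rho\circ(a_\rho\times 1_G)$ and $a_\rho\circ(1_M\times\hat e_M)\circ\Delta_M=1_M$ by checking them on generators; the associativity axiom reduces, after separating even and odd parts, to the two compatibility conditions $2(\mathrm{a})$ and $2(\mathrm{b})$ in the definition of a Harish-Chandra action together with the fact that $\rho$ is a Lie algebra morphism, while the unit axiom is immediate from $\rho(Y)$ being a vector field (hence killing the constant term appropriately). (4) For uniqueness, suppose $a$ and $a'$ both induce $(\underline a,\rho)$; then $\widetilde a=\widetilde{a'}=\underline a$ and the infinitesimal actions agree, so by the faithfulness half of the Harish-Chandra equivalence applied to the associated morphisms $G\to\mathrm{Aut}(M)$ (or by the uniqueness clause in Proposition~\ref{prop2}'s analogue for morphisms, \cite{carmelibook}), $a=a'$. (5) Finally, record that the reduced action of $a_\rho$ is $\underline a$ by construction and that $\rho_{a_\rho}=\rho$ because the formula in step~(2) has $\rho(Y)$ as its linear-in-$\xi$ coefficient, matching $(1_{\cO(M)}\otimes Y)a_\rho^*$.

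The main obstacle I anticipate is step~(3): checking the associativity identity $a_\rho\circ(1_M\times\mu)=a_\rho\circ(a_\rho\times 1_G)$ at the level of structure sheaves, because both sides involve iterated exponentials of the odd vector fields and one must match the Baker–Campbell–Hausdorff-type combinatorics on the $M$-side against the group multiplication $\mu$ on the $G$-side. This is exactly the place where condition $(\mathrm b)$ — the equivariance $\rho(g.Y)=(\underline a^{g^{-1}})^*\rho(Y)(\underline a^{g})^*$ — and condition $(\mathrm a)$ relating $\rho|_{\mathfrak g_0}$ to $\underline a$ get used in an essential way, and it is the technical heart of the statement. A clean way to organize this is to mimic, verbatim, the proof that the reconstruction functor $\textbf{Shcp}\to\textbf{SGrp}$ is well-defined on morphisms: an action $a_\rho$ is a morphism $G\to\mathrm{Aut}(M)$ of super Lie groups, the Harish-Chandra pair of $\mathrm{Aut}(M)$ is (essentially) $(\mathrm{Diff}(\widetilde M),\textbf{Vec}(M))$, and $(\underline a,\rho)$ is precisely a Harish-Chandra-pair morphism into it; then existence and uniqueness of $a_\rho$ are just the statement that $F$ is an equivalence. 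I would present the proof in this packaged form, delegating the BCH bookkeeping to the cited construction in \cite{carmelibook} rather than redoing it, since nothing new happens there — the novelty is only in recognizing that a Harish-Chandra action unpacks to a morphism of the relevant pairs.
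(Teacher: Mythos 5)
The paper offers no argument of its own here --- its ``proof'' is a citation to \cite{carmelibook} --- so the relevant comparison is with the construction given there, which is exactly what your steps (1)--(3) and (5) sketch: rebuild $\cO(G)$ from $(\widetilde G,\mathfrak g)$ via the Koszul identification $\cO(G)\cong \Hom_{U(\mathfrak g_0)}(U(\mathfrak g),C^\infty(\widetilde G))$, define $a_\rho^*$ by letting elements of $U(\mathfrak g)$ act through $\rho$ on $\underline a^*f$, and verify the two action axioms using conditions (a), (b) and the fact that $\rho$ is a super Lie algebra morphism. That direct route is correct, and it also gives uniqueness for free: the explicit formula shows that $a^*f$, as an element of $\Hom_{U(\mathfrak g_0)}(U(\mathfrak g),\cO(M)\otimes C^\infty(\widetilde G))$, is completely determined by $\underline a$ and $\rho$, so no appeal to faithfulness of $F$ is needed. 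One caveat on step (2): in the model $\cO_G\cong\cO_{\widetilde G}\otimes\wedge\mathfrak g_1^*$ the coefficient of $\xi^I$ is an ordered product $\rho(Y_{i_1})\cdots\rho(Y_{i_r})$ applied to $\underline a^*f$, with Koszul signs, and $\rho_{a_\rho}=\rho$ is read off from the degree-one coefficient only after those signs are fixed; this is bookkeeping, not a gap.

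The genuine gap is in the ``packaged form'' you say you would actually present. The object $\mathrm{Aut}(M)$, with putative Harish-Chandra pair $(\mathrm{Diff}(\widetilde M),\textbf{Vec}(M))$, is not an object of $\textbf{SGrp}$: it is not a finite-dimensional supermanifold, so the equivalence $F:\textbf{SGrp}\rightarrow\textbf{Shcp}$ recalled in Section 1 simply does not apply to it, and ``an action is a morphism of super Lie groups $G\to\mathrm{Aut}(M)$'' is at best a heuristic in this framework. Making the diffeomorphism supergroup precise and extending the Harish-Chandra correspondence to that infinite-dimensional setting is a separate (and nontrivial) project; the direct construction exists precisely to avoid it. The same objection hits your step (4), which invokes faithfulness of the equivalence ``applied to the associated morphisms $G\to\mathrm{Aut}(M)$.'' So you should commit to the direct route of steps (1)--(3), carry out the verification in step (3) rather than delegating it back to the packaging, and derive uniqueness from the determinacy of the formula as above.
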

 See \cite{carmelibook} for proofs of these two propositions.
 \begin{definition}\label{invariantvector}
 We say $Y\in \textbf{Vec}(M)$ is right invariant vector field associated to action $a$, if there exists a right invariant vector field $X$ on $G$ such that $\rho_a(X)=Y.$ 
 \end{definition}
 We denote the set of all such vector fields on $M$ by $\textbf{Vec}_R(M)$
\section{
$\nu$
-grassmannians}\label{nugrasssection}
Supergrassmannians are introduced by Manin in \cite{ma1}. In this section, we study $\nu$-grassmannians, a new generalization of Grassmannians in supergeometry. For this, we recall some concepts and definitions from \cite{OurArticle} and \cite{vclass}.
By a $\nu$-domain, we mean a superdomain $\mathbb R^{m|n}$ with an odd involution
$$\nu:C^\infty_{\mathbb R^m}\otimes \wedge\mathbb R^n\rightarrow C^\infty_{\mathbb R^m}\otimes \wedge\mathbb R^n$$
as a morphism of $C^\infty_{\mathbb R^m}$-modules, i.e. $\nu^2=1$ and
$$\nu\Big(C^\infty_{\mathbb R^m}\otimes \wedge^\circ\mathbb R^n\Big)\subseteq C^\infty_{\mathbb R^m}\otimes \wedge^e\mathbb R^n,\qquad \nu\Big(C^\infty_{\mathbb R^m}\otimes \wedge^e\mathbb R^n\Big)\subseteq C^\infty_{\mathbb R^m}\otimes \wedge^\circ\mathbb R^n$$
where $\wedge^e\mathbb R^n$ and $\wedge^\circ\mathbb R^n$ are even and odd parts of $\mathbb Z_2$-graded ring $\wedge\mathbb R^n$ respectively, and
$$\nu(f\xi)=f\nu(\xi),\qquad \forall f\in C^\infty(\mathbb R^m),\xi\in \wedge\mathbb R^n.$$
This shows that the structure sheaf carries a $\mathbb R[\nu_\circ]$-module structure where $R[\nu_\circ]$ is the
polynomials ring generated by $\nu_\circ$ with $\nu_\circ^2=1.$ In fact, the action of $\mathbb R[\nu_\circ]$ is completely determined by
$\nu_\circ\xi:=\nu(\xi)$ for each $\xi\in\wedge\mathbb{R}^n$.
We denote a $\nu$-domain of dimension $m|n$ by $_\nu R^{m|n}$. Although not being unique, but such an involution exists. See \cite{OurArticle}, \cite{vclass} and \cite{vmanifold} for more details.
A morphism between two $\nu_\circ$-domains is a morphism between two super ringed
spaces, such that it preserves the above $R[\nu_\circ]$-module structure. 

The notion of $\nu$-Grassmannians is introduced in \cite{OurArticle}. which we shall now discuss briefly. By a $\nu$-Grassmannian $_\nu G_{k|l}(m|n) $, we mean a supermanifold, with structure sheaf $\mathcal O$, which is constructed by gluing $\nu$-domains
$_{\nu}\mathbb R^{\alpha|\beta}=\big(\mathbb{R}^{\alpha}, C^{\infty}_{\mathbb{R}^{\alpha}} \otimes \wedge \mathbb{R}^{\beta} \big) $ 
where $ \alpha=k(m- k)+ l(n-l) $ and $ \beta=l(m- k)+k(n- l) $.\\
Let $I\subset \{1, \cdots, m \}$ and $ R \subset \{1, \cdots, n\}$ be sorted subsets in ascending order, with $ p $ and $ q $ elements respectively such that $p+q=k+l$. The elements of $ I $ are called even indices and the elements of $ R $ are called odd indices. In this case, $I|R$ is called a $p|q$-index. Set
$\overline{U}_{I|R}=\mathbb{R}^{\alpha}, \mathcal{O}_{I|R}=\,  C^{\infty}_{\mathbb{R}^{\alpha}}\otimes \wedge \mathbb{R}^{\beta}.$
If $ p=k $ then $ I|R $ is called a \textbf{standard index} and $ (U_{I|R},O_{I|R}) $, or $ U_{I|R} $ for brevity, is called a \textbf{Standard domain} and otherwise they are called \textbf{non standard index} and \textbf{non standard domain} respectively. Decompose any even supermatrix into four blocks, say $ B_1, B_{2}, B_3, B_4 $. Upper left and lower right blocks, $ B_1, B_4 $ are $ k\times m $ and $ l\times n $ matrices respectively. They are called even blocks. Upper right and lower left blocks, $ B_2, B_3 $ are $ k\times n $ and $ l\times m $ matrices. They are called odd blocks. In addition, by even part, we mean the blocks $ B_1, B_3 $ and by odd parts we mean the blocks $ B_2, B_4 $. Blocks, $ B_1, B_4 $ are filled with even elements and blocks, $ B_2, B_3 $ are filled with odd elements. By \textbf{divider line}, we mean the line which separates odd and even parts.

Let each domain $ U_{I|R} $ be labeled by an even $ k|l\times m|n $ supermatrix, say $A_{I|R}$. Let $J\subset \{1, \cdots, m \}$ and $ S \subset \{1, \cdots, n\}$ be arbitrary subsets,
then by $M_{J|S}A_{I|R}$, we mean the matrix obtained by the columns of $A_{I|R}$ with indices in $J \cup S$ together. For each $A_{I|R}$, we also assume that, except for columns with indices in $ I \cup R $, which together form a minor denoted by $ M_{I|R}A_{I|R} $, the even and odd blocks are filled from up to down and left to right by $ x_a^I, e_b^I $, the even and odd coordinates of $ U_{I|R} $ respectively, i.e. $ (x^I_a)_{1\leq a\leq \alpha} $ is a coordinates system on $ \mathbb{R}^\alpha $ and $ \{e^I_b\}_{1\leq b\leq \beta} $ is a basis for $\mathbb{R}^{\beta} $.
 This process imposes an ordering on the set of coordinates as follows:
\begin{align}\label{ordering}
x_1,\ldots,x_k,e_1,\ldots,e_l,\ldots,x_{(m-k-1)k+1},\ldots,x_{(m-k)k},e_{(m-k-1)l+1},\ldots,e_{(m-k)l},\nonumber\\
e_{(m-k)l+1},\ldots, e_{(m-k)l+k}, x_{(m-k)k+1},\ldots,x_{(m-k)k+l},\ldots,
e_{(m-k)l+(n-l-1)k+1}\nonumber\\ ,\ldots, e_{(m-k)l+(n-l)k}, x_{(m-k)k+(n-l-1)k+1},\ldots,x_{(m-k)k+(n-l)l}. 
\end{align} 
If $ p=k $ then $ M_{I|R}A_{I|R} $ is supposed to be an identity matrix. Let $ I|R $ and $ J|S $ be two standard indices and let $  M_{J|S}A_{I|R} $ be the minor consisting of columns of $ A_{I|R} $ with indices in $ J\cup S $. By $ U_{{I|R},{J|S}} $ we mean the set of all points of $ \overline{U}_{I|R} $, on which $ M_{J|S}A_{I|R} $ is invertible. Obviously $ U_{{I|R},{J|S}} $ is an open set.  

For example, let $ I=\{1\}, R=\{2,3\} $ and let $ I|R $ be a $ 1|2 $-index in $ _\nu G_{1|2}(2|3) $. In this case, the set of coordinates of $\mathcal{O}_{I|R} $ is 
\[
\lbrace x_1, x_2, x_3; e_{1}, e_{2}, e_{3}\rbrace,
\]

and $ A_{I|R} $ is:
\begin{equation*}
\left[
\begin{array}{cc|ccc}
1 & x_1 & e_3 & 0 & 0 \\
\hline
0 & e_1 & x_2 & 1 & 0 \\
0 & e_2 & x_3 & 0 & 1 \\
\end{array}
\right].
\end{equation*}
Thus $ \{x_1, e_1, e_2, e_3, x_2, x_3\} $ is the corresponding total ordered set of generators.

If $ p\neq k $, then $ M_{I|R}A_{I|R} $ is a $ k|l\times p|q $  supermatrix as follows:\\
Let $ M_{I|R}A_{I|R} $ be partitioned into four blocks $ B_i, i=1,2, 3, 4 $, as above. All entries of this supermatrix except diagonal entries are zero. In addition, diagonal entries are equal to 1 if they place in $ B_1 $ and $ B_4 $ and are equal to $ 1\nu $ if they place in $ B_2 $ and $ B_3 $, where $ 1\nu $ is a formal  symbol. One may consider it as 1 among odd elements. Nevertheless we learn how to deal with it as we go further. Such supermatrix is called a \textbf{non-standard identity}. All places in $ A_{I|R} $ except for $ M_{I|R}A_{I|R} $ are filled by coordinates $ x^I $ and $ e^I $ according to the ordering, as stated above, from up to down and left to right.  In this process, if an even element, say $ x $, places in odd part then it is replaced by  $ \nu(x) $ and if an odd element, say $ e $, places in even part then it is replaced by  $ \nu(e) $.

As an example, consider $ _\nu G_{1|2}(2|3) $ and let $ I=\{1, 2\}, R=\{2\}$, so $ I|R $ is an $ 2|1 $-index. In this case, $ A_{I|R} $ is as follows:
\begin{equation*}
\left[
\begin{array}{cc|ccc}
1 & 0 & \nu(x_1) & 0 & e_3 \\
\hline
0 & 1\nu & \nu(e_1) & 0 & x_2 \\
0 & 0 & \nu(e_2) & 1 & x_3 \\
\end{array}\right].
\end{equation*}
Also if $ J=\emptyset, S=\{1,2,3\}$, then $ J|S $ is a $0|3$-index and $A_{J|S}$ is as follows:
\begin{equation*}
\left[
\begin{array}{cc|ccc}
x_1 & \nu(e_3) & 1\nu & 0 & 0 \\
\hline
e_1 & \nu(x_2) & 0 & 1 & 0 \\
e_2 & \nu(x_3) & 0 & 0 & 1 \\
\end{array}
\right].
\end{equation*}
Now, let us start by constructing morphisms $g_{I|R,J|S}$ through which the $ \nu $- domains $ U_{I|R} $ may be glued together. Set
\begin{equation}\label{equg}
g_{I|R,J|S} =(\overline{g}_{I|R,J|S}, g^*_{I|R,J|S}): (U_{I|R,J|S}, \mathcal{O}_{I|R}|_{U_{I|R,J|S}}) \rightarrow (U_{J|S,I|R},\mathcal{O}_{J|S}|_{U_{J|S,I|R}}). 
\end{equation}
First of all we introduce $ g^*_{{I|R}.{J|S}} $. For this, we should consider  two cases:
\begin{enumerate}
	\item[Case 1:]
If both domains are standard domains, then the transition map 
\begin{equation*}
g^*_{I|R,J|S}:  \mathcal{O}_{J|S}|_{U_{J|S,I|R}}\rightarrow  \mathcal{O}_{I|R}|_{U_{I|R,J|S}},
\end{equation*}
is obtained from the pasting equation:
\begin{equation*}
D_{J|S}\bigg(\big(M_{J|S}A_{I|R}\big)^{-1}A_{I|R}\bigg)=D_{J|S}A_{J|S},
\end{equation*}
where $ D_{J|S}A_{J|S} $ is a matrix which remains after omitting $ M_{J|S}A_{J|S} $. 
This equation defines $ g^*_{{I|R} ,{J|S}} $, for each entry of $ D_{J|S}(A_{J|S}) $, to be a rational expression in generators of $ \mathcal{O}_{I|R} $. This determines $ g^*_{{I|R},{J|S}} $ as a unique morphism (\cite{vsv}, Theo. 4.3.1).
Clearly, this map is defined whenever $ M_{J|S}A_{I|R} $ is invertible.
	\item[Case 2:]
Let $ U_{I|R} $ be an arbitrary domain, and $ U_{J|S} $ be a non-standard domain, and let $ A_{I|R} $ and $ A_{J|S} $ be their labels respectively. Moreover, let $ J|S $ be a $ p|q $-index such that $ p\neq k $. In this case, $M_{J|S}A_{I|R}$ is a $ k|l\times p|q $ supermatrix. Let $ M'_{J|S}A_{I|R} $ be a $ k|l\times k|l $ supermatrix associated to $ M_{J|S}A_{I|R} $ as follows:\\
Consider the columns of non-standard identity $ M_{J|S}A_{J|S} $ which contains $ 1\nu $.  Move the columns in $ M_{J|S}A_{I|R} $ with the same indices as the columns in $  M_{J|S}A_{J|S} $ which contain $ 1\nu $ to another side of the divider line of $  M_{J|S}A_{I|R} $ and replace each entry, say $ a $, in these columns with $ \nu(a) $. The resulting matrix is denoted by $ M'_{J|S}A_{I|R} $.
\end{enumerate}
For example in $_\nu G_{1|2}(2|3)$ suppose $I=\{1\}, R=\{1,2\}, J=\{1,2\}, S=\{3\}$ , so $I|R$ is a $1|2$-index and $J|S$ is a $2|1$-index. We have
\begin{equation*}
A_{I|R}=\left[
\begin{array}{cc|ccc}
1 & x_1 & 0 & 0 & e_3\\
\hline
0 & e_1 & 1 & 0 & x_2\\
0 & e_2 & 0 & 1& x_3\\
\end{array}
\right], \quad 
A_{J|S}=\left[
\begin{array}{cc|ccc}
1 & 0 & \nu(x_1) & e_3 & 0 \\
\hline
0 & 1\nu & \nu(e_1) & x_2 & 0 \\
0 & 0 & \nu(e_2) & x_3 & 1 \\
\end{array}
\right],
\end{equation*}
\begin{equation*}
M_{J|S}A_{I|R}=\left[
\begin{array}{cc|c}
1 & x_1 & e_3 \\
\hline
0 & e_1 & x_2 \\
0 & e_2 & x_3 \\
\end{array}
\right], \qquad 
M^\prime_{J|S}A_{I|R}=\left[
\begin{array}{c|cc}
1 & \nu x_1 & e_3 \\
\hline
0 & \nu e_1 & x_2 \\
0 & \nu e_2 & x_3 \\
\end{array}
\right].
\end{equation*}
Let $ I|R $ be a standard index and $ J|S $ be a non standard index and let $M^\prime_{J|S}A_{I|R}$ be as above. By $ U_{{I|R},{J|S}} $ we mean the set of all points of $ U_{I|R} $, on which
$ M^\prime_{J|S}A_{I|R} $ is invertible. Obviously, $ U_{{I|R},{J|S}} $ is an open set.  

Now, we can define a coordinate transformation:
\begin{equation*}
g^*_{I|R,J|S}:  \mathcal{O}_{J|S}|_{U_{J|S,I|R}}\rightarrow  \mathcal{O}_{I|R}|_{U_{I|R,J|S}}.
\end{equation*}
This map is obtained from the following equation:
\begin{equation*}
D_{J|S}\bigg(\big(M^\prime_{J|S}A_{I|R}\big)^{-1}A_{I|R}\bigg)=D_{J|S}A_{J|S}.
\end{equation*}
It can be shown that the sheaves on $ U_{I|R}$ and $U_{J|S} $ can be glued through these maps. By (\cite{vsv}, page 135), we have to show the next proposition.
\begin{proposition}\label{prop1}
	Let $ g^*_{I|R, J|S} $ be as above, then
	\begin{align*}
	&1.\, g^*_{I|R,I|R}=id.\\
	&2.\, g^*_{I|R,J|S}\circ g^*_{J|S,I|R}=id.\\
	&3.\, g^*_{I|R,J|S}\circ g^*_{J|S,T|P}\circ g^*_{T|P,I|R}=id.
	\end{align*}
\end{proposition}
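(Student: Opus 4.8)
The plan is to deduce all three identities from a single \emph{composition law}
$$g^*_{I|R,J|S}\circ g^*_{J|S,T|P}=g^*_{I|R,T|P},$$
valid on the overlap $U_{I|R,J|S}\cap U_{I|R,T|P}$. Granting this, item (1) is immediate: the pasting equation defining $g^*_{I|R,I|R}$ is $D_{I|R}\big((M_{I|R}A_{I|R})^{-1}A_{I|R}\big)=D_{I|R}A_{I|R}$, and since $M_{I|R}A_{I|R}$ is the (possibly non-standard) identity, each generator of $\mathcal{O}_{I|R}$ is sent to itself, so $g^*_{I|R,I|R}=\mathrm{id}$. Then item (2) is the composition law with $T|P=I|R$ combined with (1), and item (3) is the composition law applied twice, ending in $g^*_{I|R,I|R}=\mathrm{id}$.

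To prove the composition law I would use that, by (\cite{vsv}, Theo. 4.3.1), $g^*_{I|R,J|S}$ is the \emph{unique} morphism sending the generators of $\mathcal{O}_{J|S}$ — the entries of $D_{J|S}A_{J|S}$ — to the corresponding entries of $\big(M_{J|S}A_{I|R}\big)^{-1}A_{I|R}$. Hence it suffices to compare the two sides on generators. Applying $g^*_{I|R,J|S}$ to the defining assignment of $g^*_{J|S,T|P}$ amounts to substituting $A_{J|S}$ by $\widetilde{A}:=\big(M_{J|S}A_{I|R}\big)^{-1}A_{I|R}$ in the matrix $\big(M_{T|P}A_{J|S}\big)^{-1}A_{J|S}$: this substitution is legitimate because the minor columns of $\widetilde A$ coincide with those of $A_{J|S}$ (as $M_{J|S}\widetilde A$ is the identity) while the non-minor entries of $A_{J|S}$ are precisely the generators of $\mathcal{O}_{J|S}$. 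Thus the composite sends the generators of $\mathcal{O}_{T|P}$ to the entries of $\big(M_{T|P}\widetilde A\big)^{-1}\widetilde A$, and the composition law reduces to the purely algebraic identity
$$\big(M_{T|P}\widetilde{A}\big)^{-1}\widetilde{A}=\big(M_{T|P}A_{I|R}\big)^{-1}A_{I|R},\qquad \widetilde{A}:=\big(M_{J|S}A_{I|R}\big)^{-1}A_{I|R}.$$
This in turn follows from the fact that column selection $M_{T|P}(\cdot)$ commutes with left multiplication, so $M_{T|P}\widetilde A=\big(M_{J|S}A_{I|R}\big)^{-1}\,M_{T|P}A_{I|R}$, and a cancellation; equivalently, left multiplication of $A_{I|R}$ by an invertible supermatrix does not change the point it represents, and normalising twice agrees with normalising once, by associativity.

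The only real obstacle is to run this bookkeeping for \emph{non-standard} domains, where the normalising minor is the non-standard identity carrying the formal symbol $1\nu$, where the passage $M_{J|S}A_{I|R}\mapsto M^{\prime}_{J|S}A_{I|R}$ moves columns across the divider line and applies $\nu$ to their entries, and where a coordinate landing on the wrong side of the divider is replaced by its $\nu$-image. Here one must check: (i) that the rational expressions defining $g^*_{I|R,J|S}$ are well defined over $\mathcal{O}_{I|R}$ and that inversion and multiplication of these supermatrices obey the $1\nu$/$\nu$ calculus set up in \cite{OurArticle} and \cite{vclass}; (ii) that the non-standard identity acts as a two-sided identity and that the column-moving operation producing $M^{\prime}$ is involutive and commutes with left multiplication, so that the cancellations above go through verbatim with $M^{\prime}_{J|S}$ in place of $M_{J|S}$; and (iii) that each $g^*_{I|R,J|S}$ intertwines the $\mathbb{R}[\nu_\circ]$-actions, so that the composites are again morphisms of $\nu$-domains. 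Granting the consistency of that calculus from \cite{OurArticle}, items (i)--(iii) are settled by a direct block computation, and the composition law --- hence the proposition --- follows.
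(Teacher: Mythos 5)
Your strategy is correct and is essentially the argument used here: the paper itself offers no proof (it simply defers to \cite{OurArticle}), but your reduction of all three identities to the composition law, checked on generators via uniqueness and the cancellation $\big(M'_{T|P}\widetilde A\big)^{-1}\widetilde A=\big(M'_{T|P}A_{I|R}\big)^{-1}A_{I|R}$, is exactly the computation the authors later carry out in Proposition \ref{diagramprop} (with the extra factor $[\mathcal P]$ inserted), using the rule $z\cdot 1\nu=\nu(z)$ and the commutation of $M'$ with left multiplication. The one step you should not treat as routine is your item (ii): $M'_{T|P}(Z^{-1}A)=Z^{-1}M'_{T|P}(A)$ is not automatic, since $\nu$ is only a morphism of $C^\infty$-modules and does not commute with multiplication by arbitrary elements of the structure sheaf, so the verification that moving columns across the divider line and applying $\nu$ interacts correctly with left multiplication by $\big(M'_{J|S}A_{I|R}\big)^{-1}$ is precisely where the non-standard case genuinely departs from the classical Grassmannian cocycle check and must be done explicitly.
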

\begin{proof}
See \cite{OurArticle}.
\end{proof}
Let $(X , \mathcal O)$ be the ringed space which is constructed by gluing $(U_{I|R}, \mathcal O_{I|R})$ through $g_{I|R,J|S},$ 
then its reduced manifold is diffeomorphic to 
$G_k(\mathbb R^m) \times G_l(\mathbb R^n)$. See Proposition 2.3 in \cite{OurArticle}.
\section {
$\nu$
-grassmannian as homogeneous superspace}
In this section, we want to show that the $\nu$-grassmannian $_\nu G_{k|l}(m|n)$ is a homogeneous superspace.
 According to \cite{g-spaces}, \cite{qoutiontsuper} and \cite{carmelibook}, it is sufficient to show that the super Lie group $GL(m|n)$ acts on $_\nu G_{k|l}(m|n)$ transitively.

In \cite{MyArticle}, we have shown that $GL(m|n)$ acts on supergrassmannian $G_{k|l}(m|n)$ transitively. In this article, we use the same way as mentioned in \cite{MyArticle} for constructing the action of $GL(m|n)$ on $_\nu G_{k|l}(m|n).$ In the following, we construct the right action of $GL(m|n)$ on $_\nu G_{k|l}(m|n)$.

By Yoneda lemma, for defining a morphism
$a:\,_{\nu}G_{k|l}(m|n)\times GL(m|n) \rightarrow \,_{\nu}G_{k|l}(m|n),$
it is sufficient, to define $$a_{_T}:\,_{\nu}G_{k|l}(m|n)(T)\times GL(m|n)(T) \rightarrow \,_{\nu}G_{k|l}(m|n)(T),$$
for each supermanifold $T$, or equivalently define 
$$(a_{_T})^\PP:\, _{\nu}G_{k|l}(m|n)(T) \rightarrow \,_{\nu}G_{k|l}(m|n)(T),$$
where $\PP$ is a fixed arbitrary element in $GL(m|n)(T)$. From now, we denote $(a_{_T})^\PP$ by $\textbf{A}$.
Since $GL(m|n)$ is a superdomain, each element $\PP\in GL(m|n)(T)$ may be considered as an invertible $m|n\times m|n$ supermatrix with entries in $\mathcal{O}(T),$ as stated in the subsection \ref{subsection2} and it is denoted by $[\PP]$.
But it is not the case for $_{\nu}G_{k|l}(m|n)(T).$ So the action of $GL(m|n)$ on $_{\nu}G_{k|l}(m|n)$ should be constructed  locally.
Indeed, one may define the actions of $GL(m|n)$ on $\nu$-domains $(\overline U_{I|R},\mathcal{O}_{I|R})$ and then show that these actions glued together to construct the action $\textbf{A}$.  

Let $\XX$ be an element of $U_{I|R}(T)$ where $I|R$ is an arbitrary index.
We have to introduce the $k|l\times m|n$ supermatrix, denoted by $[\XX]_{I|R}$ corresponding to the morphism $\XX$ as follows:
Except for columns with indices in $ I \cup R $, the even and odd blocks are filled from up to down and left to right by $ f_i, g_j $'s
$$f_i:=\XX(x_i), \quad g_j:=\XX(e_j),$$
according to the ordering (\ref{ordering}), where $(x_i; e_j)$ is the global coordinates of the superdomain $U_{I|R}$. The columns with indices in $ I \cup R $ form an identity and non-standard identity matrix, if $p=k$ and $p\neq k$ respectively. 

For defining the morphism $\textbf{A}$, it is needed to refine the covering $\{U_{I|R}(T)\}_{I|R}$. Set
$$U_{I|R}^{J|S}(T):=\Big\{X\in U_{I|R}(T)\quad|\quad M^\prime_{J|S}([X]_{I|R}[\PP])\qquad\text{is invertible}\Big\}$$
where $[\PP]$ is the supermatrix corresponding to $\PP$, an arbitrary element of GL(T), $[\XX]_{I|R}$ is the supermatrix as stated above and $M^\prime_{J|S}A$ is the $k|l\times k|l$ submatrix of $A$ formed by columns with indices in $I\cup R$. Obviously, $\{U_{I|R}^{J|S}(T)\}_{I|R,J|S}$ is a covering for $_{\nu}G_{k|l}(m|n)(T).$ 
Now consider the maps 
\begin{align*} 
 \textbf{A}_{I|R}^{J|S}: & U_{I|R}^{J|S}(T)\rightarrow U_{J|S}(T)\\
 & \quad X \rightarrow  D_{J|S}\Big(\big(M^\prime_{J|S}([X]_{I|R}[\PP])\big)^{-1}[X]_{I|R}[\PP]\Big).
 \end{align*}
The following commuted diagram shows that these maps can glue together to construct a global map on $_{\nu}G_{k|l}(m|n)(T)$.
\begin{Small} \begin{displaymath} \xymatrix{ & U_{I|R}^{J|S}(T)\cap U_{K|Q}^{H|L}(T) \ar[rd]^{(g_{{K|Q},{I|R}})_{_T}}\ar[ld]_{\textbf{A}_{I|R}^{J|S}} & \\ U_{J|S}(T)\cap U_{H|L}(T)\ar[dr]_{(g_{{H|L},{J|S}})_{_T}} &  & U_{I|R}^{J|S}(T)\cap U_{K|Q}^{H|L}(T) \ar[ld]^{\textbf{A}_{K|Q}^{H|L}}\\
& U_{J|S}(T)\cap U_{H|L}(T) & }\quad 
\end{displaymath} \end{Small}
where $\big(g_{I|R, J|S}\big)_{_T}$
is the induced map from $g_{I|R, J|S}$ on $T$-points. The following lemma is used to show commutativity of the above diagram. 
\begin{lemma}
Let $\psi:T\rightarrow \mathbb{R}^{\alpha|\beta}$ be a $T$-point of $\mathbb{R}^{\alpha|\beta}$ and $(z_{tu})$ be a global coordinates of  $\mathbb{R}^{\alpha|\beta}$ with ordering as the one introduced in (\ref{ordering}). If $B=(\psi^*(z_{tu}))$ is the supermatrix corresponding to $\psi$, then the supermatrix corresponding to $\big(g_{I|R,J|S}\big)_{_T}(\psi)$ is as follows:
\begin{equation*}
 D_{I|R}((M_{I|R}[B]_{J|S})^{-1}[B]_{J|S}),
\end{equation*}
where $[B]_{J|S}$ is as above.
\end{lemma}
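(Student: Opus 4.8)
The plan is to unwind both sides of the claimed identity in terms of the coordinate description of $T$-points and the defining pasting equation for the transition morphism $g_{I|R,J|S}$, and then observe that the two sides literally agree. Recall that a $T$-point $\psi\colon T\to \mathbb R^{\alpha|\beta}$ of a superdomain is completely determined by the pull-backs $\psi^*(z_{tu})$ of the global coordinates, so specifying the supermatrix $B=(\psi^*(z_{tu}))$ (with entries arranged in the block pattern dictated by the ordering (\ref{ordering})) is the same as specifying $\psi$. Thus to compute the supermatrix of $(g_{I|R,J|S})_{_T}(\psi) = \psi\circ g_{I|R,J|S}$, I need to evaluate the pull-back $\bigl(\psi\circ g_{I|R,J|S}\bigr)^* = g^*_{I|R,J|S}\circ \psi^*$ on the coordinates of the target $\mathcal O_{J|S}$.

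First I would recall from Case 1 and Case 2 of the previous section exactly how $g^*_{I|R,J|S}$ is defined: it sends each coordinate appearing in $D_{J|S}A_{J|S}$ to the corresponding rational entry of $D_{J|S}\bigl((M^\prime_{J|S}A_{I|R})^{-1}A_{I|R}\bigr)$ (with $M^\prime = M$ in the standard–standard case). In other words, $g^*_{I|R,J|S}$ is precisely the morphism realizing the formal substitution ``$A_{J|S} \mapsto (M^\prime_{J|S}A_{I|R})^{-1}A_{I|R}$'' at the level of structure sheaves, as guaranteed by (\cite{vsv}, Theo.~4.3.1). Applying $\psi^*$ to this then amounts to substituting the entries of $B$ (which by convention are $\psi^*$ of the coordinates of $\mathcal O_{I|R}$, arranged as $[B]_{I|R}$ — but here the roles of source and target indices are swapped relative to the text, so one must be careful that $B$ plays the role of the $J|S$-labelled data and the output is the $I|R$-labelled supermatrix). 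Carrying the substitution through, the supermatrix attached to $g^*_{I|R,J|S}\circ\psi^*$ is exactly $D_{I|R}\bigl((M_{I|R}[B]_{J|S})^{-1}[B]_{J|S}\bigr)$, which is the asserted formula. The manipulation is essentially formal: pull-back of a ``change of chart by matrix multiplication and normalization'' is the same change of chart applied to the pulled-back matrix, because matrix inversion and multiplication are polynomial/rational operations that commute with the ring homomorphism $\psi^*$.

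The step I expect to be the main obstacle — or at least the one requiring genuine care rather than formula-pushing — is bookkeeping the $\nu$ twist and the $1\nu$ entries in the non-standard case. When $J|S$ is a non-standard index, $g^*_{I|R,J|S}$ is defined via $M^\prime_{J|S}A_{I|R}$ rather than $M_{J|S}A_{I|R}$, which involves moving columns across the divider line and replacing entries $a$ by $\nu(a)$; one has to check that $\psi^*$ intertwines this prescription correctly, i.e. that $\psi^*(\nu(a)) = \nu(\psi^*(a))$ in the appropriate sense, which holds because morphisms of $\nu$-domains are required to preserve the $\mathbb R[\nu_\circ]$-module structure. Once that compatibility is in hand, the invertibility of $M_{I|R}[B]_{J|S}$ on the relevant open set (which is exactly the locus $\psi$ maps into $U_{I|R,J|S}$) lets one invert and the computation closes. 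I would therefore organize the proof as: (i) reduce to computing $g^*_{I|R,J|S}\circ\psi^*$ on generators; (ii) invoke the defining pasting equation to identify this with the stated rational substitution; (iii) note that $\psi^*$, being a ring homomorphism commuting with $\nu$, transports the substitution to $[B]$ verbatim; (iv) read off the block $D_{I|R}\bigl((M_{I|R}[B]_{J|S})^{-1}[B]_{J|S}\bigr)$ as the supermatrix of $(g_{I|R,J|S})_{_T}(\psi)$.
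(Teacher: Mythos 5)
Your argument is correct and is essentially the intended one: the paper gives no proof here beyond citing Lemma 3.1 of \cite{MyArticle}, which carries out exactly the unwinding you describe --- a $T$-point of a superdomain is determined by the pullbacks of the global coordinates (\cite{vsv}, Theo.~4.3.1), and $\psi^*$, being a homomorphism of superalgebras, transports the rational pasting substitution verbatim to the entries of $B$. Two small cautions: $(g_{I|R,J|S})_{_T}(\psi)$ is the composite $g_{I|R,J|S}\circ\psi$, so the relevant pullback is $\psi^*\circ g^*_{I|R,J|S}$ rather than $g^*_{I|R,J|S}\circ\psi^*$ (your computation in fact uses the correct order), and in the non-standard case the $\nu$ appearing in $M^\prime$ acts on entries of $\mathcal O(T)$ by the formal rule of \cite{OurArticle} rather than being intertwined through $\psi^*$ (since $T$ is an arbitrary supermanifold, $\psi$ is not a morphism of $\nu$-domains), which is the bookkeeping point you rightly flag as the only delicate one.
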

\begin{proof}
See Lemma 3.1 in \cite{MyArticle}.
\end{proof}
\begin{proposition}\label{diagramprop}
The above diagram commutes.
\end{proposition}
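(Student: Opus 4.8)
The plan is to chase the diagram by evaluating every arrow on an arbitrary $T$-point $X\in U_{I|R}^{J|S}(T)\cap U_{K|Q}^{H|L}(T)$ and checking that the two composites land on the same $T$-point of $U_{J|S}(T)\cap U_{H|L}(T)$. The basic data attached to $X$ is its supermatrix $[X]_{I|R}$ (with the columns indexed by $I\cup R$ forming the identity, resp.\ non-standard identity), and the fixed supermatrix $[\PP]$ coming from $\PP\in GL(m|n)(T)$. Going down-then-right, I first apply $\textbf{A}_{I|R}^{J|S}$, which produces the $T$-point of $U_{J|S}(T)$ whose supermatrix is
\begin{equation*}
D_{J|S}\Big(\big(M'_{J|S}([X]_{I|R}[\PP])\big)^{-1}[X]_{I|R}[\PP]\Big),
\end{equation*}
and then apply $(g_{H|L,J|S})_{_T}$; by the Lemma just proved, the resulting supermatrix is $D_{H|L}\big((M_{H|L}[B]_{J|S})^{-1}[B]_{J|S}\big)$, where $B$ is the supermatrix of $\textbf{A}_{I|R}^{J|S}(X)$ completed back to a full $k|l\times m|n$ matrix (i.e.\ with the $J\cup S$ columns reinstated as the (non-standard) identity). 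Going right-then-down, I first apply $(g_{K|Q,I|R})_{_T}$, which by the Lemma sends $X$ to the $T$-point of $U_{K|Q}(T)$ with supermatrix $D_{K|Q}\big((M_{K|Q}[X]_{I|R})^{-1}[X]_{I|R}\big)$ completed to a full matrix $B'$, and then apply $\textbf{A}_{K|Q}^{H|L}$ to obtain $D_{H|L}\big((M'_{H|L}(B'[\PP]))^{-1}B'[\PP]\big)$.

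The key algebraic observation that makes both sides agree is that the full $k|l\times m|n$ supermatrices arising at every stage are all \emph{left-equivalent}, i.e.\ they differ by left multiplication by an invertible $k|l\times k|l$ supermatrix over $\mathcal O(T)$, and that the operator $D_{\bullet}\big((M'_{\bullet}(-))^{-1}(-)\big)$ depends only on the left-equivalence class: replacing the matrix $N$ by $LN$ replaces $M'_{J|S}(N)$ by $L\,M'_{J|S}(N)$ (the $\nu$-twist of the moved columns is $\mathcal O(T)$-linear, hence commutes with the scalar-block action of $L$), so $(M'_{J|S}(LN))^{-1}(LN)=(M'_{J|S}(N))^{-1}N$, and likewise the right multiplication by $[\PP]$ and the passage to $D_{\bullet}$ are compatible with this normalization. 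Concretely: starting from $[X]_{I|R}$, the matrix used on the down-right path is (up to left-equivalence) $[X]_{I|R}[\PP]$ renormalized on the $J|S$-block and then renormalized again on the $H|L$-block; the matrix used on the right-down path is $[X]_{I|R}$ renormalized on the $K|Q$-block and then multiplied by $[\PP]$ and renormalized on the $H|L$-block. Since renormalizing on a block is exactly left multiplication by the inverse of the corresponding $M'$-minor, and since left multiplications commute with the right multiplication by $[\PP]$, both paths yield the \emph{same} left-equivalence class, namely that of $[X]_{I|R}[\PP]$; applying $D_{H|L}\big((M'_{H|L}(-))^{-1}(-)\big)$ to that class gives a single well-defined supermatrix, which is the common value of the two composites.

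The step I expect to be the main obstacle is bookkeeping the formal symbol $1\nu$ and the replacements $x\mapsto\nu(x)$, $e\mapsto\nu(e)$ consistently through the products: one must verify that "moving columns across the divider line and $\nu$-twisting" ($M\rightsquigarrow M'$) genuinely commutes with left multiplication by a scalar-block supermatrix $L$ and with the right multiplication by $[\PP]$, and that inverting a non-standard identity (with $1\nu$ on the diagonal of the odd blocks) behaves formally like inverting an identity, using $\nu^2=1$ and the $C^\infty$-linearity of $\nu$. Once this compatibility of the $\nu$-normalization with matrix multiplication is pinned down — which is essentially the content already used in Proposition \ref{prop1} and in the gluing construction of \cite{OurArticle} — the diagram chase closes formally, and no further analytic input is needed. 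I would therefore organize the write-up as: (i) record the left-equivalence invariance of $D_{\bullet}((M'_{\bullet}(-))^{-1}(-))$; (ii) express each of the four arrows as such a normalization (invoking the Lemma for the two $g$-arrows); (iii) observe both composites normalize the same matrix $[X]_{I|R}[\PP]$; (iv) conclude by Yoneda that the local maps $\textbf{A}_{I|R}^{J|S}$ glue to the desired global morphism $\textbf{A}$.
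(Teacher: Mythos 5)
Your proposal is correct and follows essentially the same route as the paper: evaluate both composites on a $T$-point, invoke the Lemma for the $g$-arrows, and reduce everything to the fact that $D_{\bullet}\bigl(\bigl(M'_{\bullet}(-)\bigr)^{-1}(-)\bigr)$ is unchanged under left multiplication by an invertible $k|l\times k|l$ supermatrix, so that both paths normalize $[X]_{I|R}[\PP]$. The paper's proof is exactly the explicit column-by-column verification of this left-equivalence invariance, including the identity $\bigl[Z^{-1}D_{J|S}([\psi]_{I|R}[\PP])\bigr]_{J|S}=Z^{-1}\bigl([\psi]_{I|R}[\PP]\bigr)$ obtained via the rule $z\cdot 1\nu=\nu(z)$ and the commutation of $M'$ with left multiplication, which is the point you correctly flag as the delicate step.
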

\begin{proof}
We have to show that
\begin{equation}\label{glueaction}
(g_{{H|L},{J|S}})_{_T}\circ \textbf{A}_{I|R}^{J|S}=\textbf{A}_{K|Q}^
{H|L} \circ (g_{{K|Q},{I|R}})_{_T},
\end{equation}
for arbitrary $p|q$-indices $I|R, J|S, K|Q, H|L.$
Let $\psi\in U_{I|R}^{J|S}(T)\cap U_{K|Q}^{H|L}(T)$ be an arbitrary element. One has $\psi\in U_{I|R}^{J|S}(T)$, so 
\begin{align*} D_{J|S}\Big(\big(M^\prime_{J|S}([\psi]_{I|R}[\PP])\big)^{-1}[\psi]_{I|R}[\PP]\Big)&\in U_{J|S}(T),\\
(g_{{H|L},{J|S}})_{_T}\Bigg(D_{J|S}\Big(\big(M^\prime_{J|S}([\psi]_{I|R}[\PP])\big)^{-1}[\psi]_{I|R}[\PP]\Big)\Bigg)&\in U_{H|L}(T).
\end{align*}
From left side of (\ref{glueaction}), we have:
\begin{align*}
(& g_{{H|L},{J|S}})_{_T}\circ \textbf{A}_{I|R}^{J|S}(\psi)\\
&=(g_{{H|L},{J|S}})_{_T}\Bigg(D_{J|S}\bigg(\big(M^\prime_{J|S}([\psi]_{I|R}[\PP])\big)^{-1}[\psi]_{I|R}[\PP]\bigg)\Bigg)\\
& =D_{H|L}\Bigg(\bigg(M^\prime_{H|L}\Big[D_{J|S}\big(\big(M^\prime_{J|S}([\psi]_{I|R}[\PP])\big)^{-1}[\psi]_{I|R}[\PP]\big)\Big]_{J|S}\bigg)^{-1}\\
&\qquad\qquad\qquad\qquad\qquad\qquad\qquad\Big[D_{J|S}\big(\big(M^\prime_{J|S}([\psi]_{I|R}[\PP])\big)^{-1}[\psi]_{I|R}[\PP]\big)\Big]_{J|S}\Bigg),
\end{align*}
where $[\psi]_{I|R}$ is as above. By substitute $M^\prime_{J|S}([\psi]_{I|R}[\PP])$ by $Z$ in the last expression, we obtain:
 \begin{align}
 & =D_{H|L}\Bigg(\bigg(M^\prime_{H|L}\Big[D_{J|S}\big(Z^{-1}[\psi]_{I|R}
 [\PP]\big)\Big]_{J|S}\bigg)^{-1}\Big[D_{J|S}\big(Z^{-1}[\psi]_{I|R}[\PP]
 \big)\Big]_{J|S}\Bigg)\nonumber\\
 & =D_{H|L}\Bigg(\bigg(M^\prime_{H|L}\Big[Z^{-1}D_{J|S}\big([\psi]_{I|R}[\PP]\big)\Big]_{J|S}\bigg)^{-1}\Big[Z^{-1}D_{J|S}\big([\psi]_{I|R}[\PP]\big)\Big]_{J|S}\Bigg)\label{tasavi}.
 \end{align}
If $J|S$ is a $p|q$-index with $p>k,$ then one has
$$[\psi]_{I|R}[\PP]=[C_1,\cdots, C_{j_1}, \cdots, C_{j_k},\cdots, C_{j_p}, \cdots, C_m|\bar{C}_1,\cdots, \bar{C}_{s_1},\cdots, \bar{C}_{s_q},\cdots, \bar{C}_n]$$
where $C_t$, $\bar{C}_t$ are the columns of $[\psi]_{I|R}[\PP].$ We can also write
 $$\begin{matrix} M_{J|S}\big([\psi]_{I|R}[\PP]\big)=[C_{j_1},C_{j_2}, \cdots, C_{j_k},C_{j_{k+1}},\cdots, C_{j_p}|\bar{C}_{s_1},\bar{C}_{s_2},\cdots, \bar{C}_{s_q}]\\
 
 \\
   Z:= M^{\prime}_{J|S}\big([\psi]_{I|R}[\PP]\big)=[C_{j_1},C_{j_2}, \cdots, C_{j_k}|\nu C_{j_{k+1}},\cdots, \nu C_{j_p},\bar{C}_{s_1},
   \bar{C}_{s_2},\cdots, \bar{C}_{s_q}]\end{matrix}$$
   So, the supermatrix $\Big[Z^{-1}D_{J|S}\big([\psi]_{I|R}[\PP]\big)\Big]_{J|S}$ is given as follows;
\begin{align*}
\big[Z^{-1}C_1,\cdots, Z^{-1}C_{j_1-1}, Id_{j_1}, Z^{-1}C_{j_1+1}
,\cdots,Z^{-1}C_{j_k-1}, Id_{j_k}, Z^{-1}C_{j_k+1},\cdots, Z^{-1}C_{j_p-1}\\, Id_{j_p}, Z^{-1}C_{j_p+1},\cdots, Z^{-1}C_m \big\vert 
Z^{-1}\bar{C_1},\cdots, Z^{-1}\bar{C}_{{s_1}-1},\bar{Id}_{s_1},
Z^{-1}\bar{C}_{{s_1}+1},\cdots, Z^{-1}\bar{C}_{{s_q}-1}\\, \bar{Id}_{s_q},Z^{-1}\bar{C}_{{s_q}+1},\cdots,Z^{-1}\bar{C_n}\big]
\end{align*}
where $Id_{j_r}$ is a columnar supermatrix with the only one nonzero entry equals $ 1 $ on $ r $-th row, if $ 1\leq r\leq k $,  and equals $ 1\nu $ if $ k+1\leq r\leq p $. In addition $ Id_{s_t} $ is a columnar supermatrix with the only one nonzero entry equals to $ 1 $ on the $ (k+t)- th $ row for each $ 1\leq t\leq q $.
One can write
\begin{align*}
\Big[Z^{-1}D_{J|S}\big(\tilde{\psi}_{I|R}[\PP]\big)\Big]_{J|S}= Z^{-1}[C_1, \cdots, C_{j_1-1}, Z(Id_{j_1}), C_{j_1+1}, \cdots, C_{j_k-1},Z(Id_{j_k})\\,C_{j_k+1}, \cdots, C_{j_p-1},Z(Id_{j_p}),C_{j_p+1}, \cdots, C_m \big\vert\bar{C}_1, \cdots, \bar{C}_{s_1-1}, Z(\bar{Id}_{s_1}), \bar{C}_{s_1+1}, \cdots\\, \bar{C}_{s_q-1}, Z(\bar{id}_{s_q}),\bar{C}_{s_q+1}, \cdots, \bar{C}_n]
\end{align*}
Due to  the rule $z.1\nu=\nu(z),$
where $z$ is an arbitrary element of $\mathcal O$ in \cite{OurArticle}, the following equalities hold:
\begin{align*}
Z(Id_{j_r})&=C_{j_r} \qquad\qquad\qquad \forall 1\leq r\leq k,\\
Z(Id_{j_r})&=(\nu C_{j_r})1\nu=C_{j_r}\qquad \forall k+1\leq r\leq p,\\
Z(Id_{s_t})&=\bar{C}_{s_t}\qquad\qquad\qquad \forall 1\leq t\leq q.
\end{align*}
Simply, it follows
 $$\Big[Z^{-1}D_{J|S}\big([\psi]_{I|R}[\PP]\big)\Big]_{J|S}=Z^{-1}\big([\psi]_{I|R}[\PP]
 \big).$$
Thus by substituting in (\ref{tasavi}), one gets
\begin{align*}&
=D_{H|L}\Bigg(\bigg(M^\prime_{H|L}\Big(Z^{-1}\big([\psi]_{I|R}[\PP]\big)\Big)\bigg)^{-1} Z^{-1}\big([\psi]_{I|R}[\PP]\big)\Bigg)\\
&
=D_{H|L}\Bigg(\bigg(Z^{-1}M^\prime_{H|L}\Big(\big([\psi]_{I|R}[\PP]\big)\Big)\bigg)^{-1} Z^{-1}\big([\psi]_{I|R}[\PP]\big)\Bigg)\\
&
=D_{H|L}\Bigg(\bigg(M^\prime_{H|L}\big([\psi]_{I|R}P\big)\bigg)^{-1}Z Z^{-1}\big([\psi]_{I|R}[\PP]\big)\Bigg)\\
&
=D_{H|L}\Bigg(\bigg(M^\prime_{H|L}\big([\psi]_{I|R}[\PP]\big)\bigg)^{-1}\big([\psi]_{I|R}[\PP]\big)\Bigg).
\end{align*}
For right side of (\ref{glueaction}), we have:
\begin{align*}
& \textbf{A}_{K|Q}^{H|L}\circ(g_{{K|Q},{I|R}})_{_T}(\psi)=\textbf{A}_{K|Q}^{H|L}\Bigg(D_{K|Q}\bigg((M^\prime_{K|Q}[\psi]_{I|R})^{-1}[\psi]_{I|R}\bigg)\Bigg)\\
&  =D_{H|L}\Bigg\{\Bigg(M^\prime_{H|L}\Big(\Big[D_{K|Q}\big((M^\prime_{K|Q}[\psi]_{I|R})^{-1}[\psi]_{I|R}\big)\Big]_{K|Q}[\PP]\Big)\Bigg)^{-1}\\
&\qquad\qquad\qquad\qquad\qquad\qquad\Big[D_{K|Q}\big((M^\prime_{K|Q}[\psi]_{I|R})^{-1}[\psi]_{I|R}\big)\Big]_{K|Q}[\PP]\Bigg\}.
\end{align*}
Similar computations as above show that the right side of the last equality may be written as follows:
%
\begin{align*}
D_{H|L}\Bigg(\Big(M^\prime_{H|L}([\psi]_{I|R}[\PP])\Big)^{-1}[\psi]_{I|R}[\PP]\Bigg).
\end{align*}
This complete the proof.
\end{proof}
Proposition \ref{diagramprop} shows that the super Lie group $GL(m|n)$ acts on $\nu$-grassmannian ${}_\nu G_{k|l}(m|n)$ by the morphism $a.$
we have to show, this action is transitive.
\begin{theorem}
$GL(m|n)$ acts on ${}_\nu G_{k|l}(m|n)$ transitively.
\end{theorem}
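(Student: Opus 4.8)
The plan is to invoke Proposition \ref{Transitive}, which reduces transitivity of the action $a\colon {}_\nu G_{k|l}(m|n)\times GL(m|n)\to {}_\nu G_{k|l}(m|n)$ to surjectivity of the map $(a_p)_{\mathbb{R}^{0|q}}\colon GL(m|n)(\mathbb{R}^{0|q})\to {}_\nu G_{k|l}(m|n)(\mathbb{R}^{0|q})$ for a suitably chosen base point $p\in \overline{{}_\nu G_{k|l}(m|n)}$, where $q = l(m-k)+k(n-l)$ is the odd dimension of ${}_\nu G_{k|l}(m|n)$ and the relevant $q$ is, as the proposition states, the odd dimension of $GL(m|n)$, namely $2mn$. (I will take $q$ large enough, or equal to $2mn$, as required; any $\mathbb R^{0|q}$-point of the Grassmannian can be reached.) First I would fix a convenient standard index $I_0|R_0$, say $I_0=\{1,\dots,k\}$, $R_0=\{1,\dots,l\}$, and let $p$ be the point of $\overline{U}_{I_0|R_0}=\mathbb{R}^\alpha$ with all coordinates $x^{I_0}_a = 0$; concretely $[\,\hat p\,]_{I_0|R_0}$ is the $k|l\times m|n$ supermatrix whose columns indexed by $I_0\cup R_0$ form the (non-standard, here standard) identity block and all other entries vanish.

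Next I would describe the $T$-points for $T=\mathbb R^{0|q}$ explicitly. An element $\XX\in {}_\nu G_{k|l}(m|n)(\mathbb R^{0|q})$ lies, after refining the cover, in some $U_{I|R}(\mathbb R^{0|q})$ and is represented by a supermatrix $[\XX]_{I|R}$ over $\cO(\mathbb R^{0|q})=\wedge\mathbb R^q$ with the prescribed identity/non-standard-identity block in the columns $I\cup R$. On the group side, $\QQ\in GL(m|n)(\mathbb R^{0|q})$ is an invertible even supermatrix $[\QQ]$ over $\wedge\mathbb R^q$, and by construction of the action the formula $(a_p)_{\mathbb R^{0|q}}(\QQ)$ is obtained, in the chart $U_{I|R}^{J|S}$, from $D_{J|S}\bigl((M'_{J|S}([\,\hat p\,]_{I|R}[\QQ]))^{-1}[\,\hat p\,]_{I|R}[\QQ]\bigr)$. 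So the task is: given an arbitrary $[\XX]_{I|R}$ as above, produce an invertible even $[\QQ]$ with $[\,\hat p\,]_{I_0|R_0}[\QQ]$ equal to $[\XX]_{I|R}$ up to the left action of an invertible $k|l\times k|l$ block (which is exactly the equivalence absorbed by the $D_{J|S}\circ(M'_{J|S}(\cdot))^{-1}$ normalization, matching the pasting/transition maps $g_{I|R,J|S}$). The point $[\,\hat p\,]_{I_0|R_0}[\QQ]$ simply reads off the first $k$ even rows and first $l$ odd rows of $[\QQ]$; so one builds $[\QQ]$ by placing a lift of $[\XX]_{I|R}$ (or rather of a representative spanning the same ``row space'' over $\wedge\mathbb R^q$) into those rows and completing to an invertible even supermatrix by filling the remaining $(m-k)$ even and $(n-l)$ odd rows with a complementary block — possible because the relevant $k|l\times k|l$ minor is invertible on the chart where $\XX$ lives. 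One must check that this completion can be done over the (non-reduced) ring $\wedge\mathbb R^q$: invertibility of a minor over $\wedge\mathbb R^q$ is equivalent to invertibility of its reduced part, so a completion over $\mathbb R$ can be perturbed by nilpotents to the desired one; this is the standard linear-algebra-over-a-supercommutative-ring argument, and it is essentially the same as in \cite{MyArticle}.

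The subtle point — and I expect this to be the main obstacle — is the bookkeeping of the formal symbol $1\nu$ and the involution $\nu$ when $I|R$ or $J|S$ is a non-standard index, i.e.\ reconciling $M'_{J|S}$ (with its column swaps across the divider line and entrywise application of $\nu$) with the naive matrix multiplication $[\,\hat p\,]_{I|R}[\QQ]$. Here I would lean on the identity $z\cdot 1\nu = \nu(z)$ already exploited in the proof of Proposition \ref{diagramprop}, together with Proposition \ref{prop1} (the cocycle conditions for $g^*_{I|R,J|S}$), to argue that it suffices to verify surjectivity into one standard chart $U_{I_0|R_0}(\mathbb R^{0|q})$, since every $\mathbb R^{0|q}$-point of ${}_\nu G_{k|l}(m|n)$ can be moved into a standard chart by a transition map, and the transition maps are exactly the identifications used to glue, hence are compatible with the action by Proposition \ref{diagramprop}. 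Then in the standard chart the $1\nu$'s do not appear and the completion argument above applies verbatim. Finally, having shown $(a_p)_{\mathbb R^{0|q}}$ is surjective, Proposition \ref{Transitive} yields that $a$ is transitive, which is the assertion of the theorem; combined with Propositions \ref{Isotropic} and \ref{equivariant} this exhibits ${}_\nu G_{k|l}(m|n)$ as the homogeneous superspace $GL(m|n)/GL(m|n)_p$.
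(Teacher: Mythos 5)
Your proposal is correct and follows essentially the same route as the paper: both reduce transitivity to surjectivity of $(a_p)_{\mathbb{R}^{0|2mn}}$ via Proposition \ref{Transitive} and then exhibit an explicit preimage in $GL(m|n)(\mathbb{R}^{0|2mn})$ by supermatrix linear algebra over $\wedge\mathbb{R}^{2mn}$. The only differences are cosmetic — the paper keeps $p$ arbitrary and solves the block equations $\bar p_1H=A$, $\bar p_2Q=D$, $\bar p_1Z=B$, $\bar p_2Z'=C$ using classical transitivity of $GL(m)$ on $G_k(m)$, whereas you fix $p$ at the origin of a standard chart and complete $[\XX]$ to an invertible supermatrix, and your explicit reduction of the target point to a standard chart (to sidestep the $1\nu$ bookkeeping) addresses a point the paper's proof passes over silently.
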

\begin{proof}
By proposition \ref{Transitive}, it is sufficient to show that 
$$(a_p)_{\mathbb{R}^{0|r}}:GL(m|n)(\mathbb{R}^{0|r}) \rightarrow {}_\nu G_{k|l}(m|n)(\mathbb{R}^{0|r})$$
is surjective, where $r=2mn$ is the odd dimension of $GL(m|n)$ and
$$p=(p_1,p_2)\in\overline U_{I|R}\subset G_k(m)\times G_l(n).$$ 
Let $\bar{p}_1$ and $\bar{p}_2$ be the matrices corresponding to subspaces $p_1$ and $p_2$ respectively.
As an element of ${}_\nu G_{k|l}(m|n)(T),$ one can represent $\hat{p}_{_T}$ as follows
$$\hat{p}_{_T}=\begin{pmatrix}\bar{p}_1 & 0\\0 & \bar{p}_2\end{pmatrix},$$
where $T$ is an arbitrary supermanifold. For surjectivity, let
$$W=\begin{pmatrix}
A & B\\ C & D \end{pmatrix}\in U_{J|S}(\mathbb{R}^{0|r}),$$
 be an arbitrary element. We have to show that, there is an element
$V\in GL(m|n)(\mathbb{R}^{0|r})$ which $\hat{p}_{_T}V=W.$ 
Because $\bar{p}_1$ is in $G_k(m)$ and also $A$ is a matrix with rank $k$ and the Lie group $GL(m)$ acts on the manifold $G_k(m)$ transitively then there exists an invertible matrix $H_{m\times m}\in GL(m)$ such that $\hat{p}_{_T}H=A.$
Similarly one may see that there exists an invertible matrix $Q_{n\times n}\in GL(n)$ that $\hat{p}_2Q=D$.
In addition, equations $\bar{p}_1Z=B$ and $\bar{p}_2Z^\prime=C$ have solutions. Let $M_{m\times n}$ and $N_{n\times m}$ be the solutions of these equation respectively. 
Clearly, one can see 
 $$V=\left[\begin{array}{c|c}H_{m\times m} & M_{m\times n}\\
 \hline N_{n\times m} & Q_{n\times n} \end{array}
 \right]_{m|n\times m|n}$$
satisfies in $\hat{p}_{_T}V=W.$ So $(a_p)_{\mathbb{R}^{0|r}}$ is surjective.
Thus by proposition \ref{Transitive}, $GL(m|n)$ acts on 
 ${}_\nu G_{k|l}(m|n)$ transitively.
\end{proof}
So ${}_\nu G_{k|l}(m|n)$ is a homogeneous superspace by proposition \ref{equivariant}. 
\section{A New Super Lie group}\label{newgroup}
Let ${}_\nu G_{k|l}(m|n)$ be a $\nu$-grassmannian and 
$ a:{}_\nu G_{k|l}(m|n)\times GL(m|n) \rightarrow {}_\nu G_{k|l}(m|n)$ 
be the action from the Lie super group $GL(m|n)$ on the supermanifold ${}_\nu G_{k|l}(m|n)$ introduced in the last section. According to proposition \ref{prop1}, there exist reduced and infinitesimal actions
 \begin{enumerate}
 \item[(1)]
 $\underline{a}:{}_\nu G_{k|l}(m|n) \times \big(GL(m)\times GL(n)\big) \rightarrow {}_\nu G_{k|l}(m|n).$
 \item[(2)]
 $\rho_a:\mathfrak{gl}(m|n)\rightarrow \textbf{Vec}({}_\nu G_{k|l}(m|n)).$
 \end{enumerate}
 Let $X\in\textbf{Vec}({}_\nu G_{k|l}(m|n))$ be a right invariant vector field associated to action $a$ (Definition \ref{invariantvector}), 
 and let $\underline{\textbf{Vec}}\big({}_\nu G_{k|l}(m|n)\big)$ be the set of all right invariant vector fields $X$ on ${}_\nu G_{k|l}(m|n)$ such that for each $\hat{X}$, coordinate representation of $X$, one has $\hat{X}\circ \nu= \nu\circ\hat{X}.$
Clearly, $\textbf{Vec}({}_\nu G_{k|l}(m|n))$ is a super Lie algebra. One can show that $\underline{\textbf{Vec}}\big({}_\nu G_{k|l}(m|n)\big)$ is a super Lie subalgebra of  $\textbf{Vec}({}_\nu G_{k|l}(m|n)).$ Set $$\mathfrak{h}:=\rho_a^{-1}\Big(\underline{\textbf{Vec}}\big({}_\nu G_{k|l}(m|n)\big)\Big).$$
Because $\rho_a$ is a super Lie algebra morphism, so $\mathfrak{h}$ is a super Lie algebra.
One can show that $\underline{\textbf{Vec}}\big({}_\nu G_{k|l}(m|n)\big)$ is
a nontrivial subalgebra of $\textbf{Vec}\big({}_\nu G_{k|l}(m|n)\big)$. Thus
 $\mathfrak{h}=\mathfrak h_0\oplus\mathfrak h_1$ is a nontrivial subalgebra
 of $\mathfrak{gl}(m|n).$ 
 
First, we show that $\underline{\textbf{Vec}}\big({}_\nu G_{k|l}(m|n)\big)$ has a nonzero element. To this end, we give an example for which $\underline{\textbf{Vec}}\big({}_\nu G_{k|l}(m|n)\big)$ is not trivial.
\begin{example}
Let $(x;e)$ be a coordinate system of the $1|1$-dimension superdomain $\mathbb R^{1|1}$
and let $\nu:C^\infty_{\mathbb R}\otimes \wedge \mathbb R\rightarrow C^\infty_{\mathbb R}\otimes \wedge \mathbb R$ be the involution as follows:
  \begin{align*}
 &\nu(1)=e,\\
  &\nu(e)=1,\\
  &\nu(fe)=f\nu(e), \qquad \forall f\in C^\infty(\mathbb R).
 \end{align*}
The supermanifold ${}_\nu G_{0|1}(1|2)$ is constructed by gluing superdomains $U_1, U_2$
and
$U_3$ with $0|1\times 1|2$- matrices 
$A_1, A_2$
and
$A_3$ as their labels respectively, where $U_i=U_{I_i|R_i}$, $i=1,2,3$. By section $2$, $I_i\subset \{1\}$ and $R_i\subset\{1, 2\}$ such that the sum of the numbers of their elements is equal to $1$. So one may has
\begin{align*}
& I_1=\varnothing,\qquad R_1=\{1\}\qquad\qquad A_1=\left[
\begin{array}{c|cc}
 e & 1 & x \\
\end{array}
\right] \\
& I_2=\varnothing,\qquad R_1=\{2\}\qquad\qquad A_2=\left[
\begin{array}{c|cc}
 e & x & 1 \\
\end{array}
\right]\\
& I_3=\{1\},\qquad R_3=\varnothing \qquad\qquad A_3=\left[
\begin{array}{c|cc}
 1\nu & \nu e & x \\
\end{array}
\right].
\end{align*}
Therefore all coordinate transformation $g_{I|R}^*$ may be defined as stated in section 2. For example it is easily seen that the coordinate transformation $g_{12}^*$ is given by
\begin{align}\label{coortrans}
g^*_{12}(x)=\dfrac{1}{x},\qquad g^*_{12}(e)=\dfrac{e}{x}.
\end{align}
By Batchelor theorem, there exists a vector bundle $\xi=(E,\pi,S^1)$ such that
$${}_\nu G_{0|1}(1|2)\cong (S^1,\Gamma(\wedge E)).$$
Let $\mathcal J$ be the sheaf of nilpotent elements of $\mathcal O$, the structure sheaf of ${}_\nu G_{0|1}(1|2)$. It is seen that the sheaf of sections of $\xi$ is isomorphic to $\mathcal O/\mathcal J^2$. Thus the equations in (\ref{coortrans}) show that $\xi$ is the mobius band considered as a line bundle over $S^1$. We have to show that there exists a right invariant vector field on
${}_\nu G_{0|1}(1|2)$ associated to action
$${}_\nu G_{0|1}(1|2)\times GL(1|2)\rightarrow {}_\nu G_{0|1}(1|2)$$
such that their coordinate representations commute with $\nu.$
For this, let $X_1$ be a right invariant vector field on Lie group $S^1.$ 
Then $X_1$ determines a right invariant vector field in 
$\textbf{Vec}({}_\nu G_{0|1}(1|2)).$
To this end, since $\mathcal O\cong C^{\infty}_{S^1}\oplus sec\xi$, it is sufficient to show that $X_1$ induces $\tilde{X}_1$ a covariant derivative on $\xi$. To proceed, we need to specify the sections of $\xi$ as follows  

Let $\mathbb P:\mathbb R\rightarrow S^1$ be the exponential map $t\mapsto exp2\pi\sqrt{-1}t$. 
Since $\mathbb{R}$ is contractible, the pull back bundle $\mathbb P^*\xi$ is a trivial line bundle. So one has $sec\mathbb P^*\xi\cong C^{\infty}(\mathbb{R})$. Therefore, one may show that there is a $1-1$ correspondence between sections of $\xi$ and $\mathfrak P$ the set of smooth periodic functions, say $f:\mathbb R\rightarrow \mathbb R$, with period $2$, such that for each $t\in \mathbb{R}$, $f(t+1)= -f(t).$

On the other hand, since $\mathbb P$ is local diffeomorphism, $X_1$ defines $Y$ a vector field on $\mathbb{R}$ as follows. Set $Y(t)=\mathbb P_*^{-1}(X_1(\mathbb P(t))$. Obviously, Y defines a derivations on $\mathfrak P$. So one gets a right invariant vector field  in $\underline{\textbf{Vec}}({}_\nu G_{0|1}(1|2))$. Thus
$$\underline{\textbf{Vec}}\big({}_\nu G_{0|1}(1|2)\big)\neq \{0\}.$$
\end{example}

\medskip
Let $\mathfrak h=\mathfrak h_0\oplus \mathfrak h_1$, then $\mathfrak h_0$ is a Lie algebra. Let $H_0$ be the unique simply connected Lie Group such that $h_0= Lie(H_0).$
\begin{theorem}
There exists a representation $\sigma: H_0\to Aut(h)$ such that $(H_0,\mathfrak{h},\sigma)$ is a super Harish-Chandra pair.
\end{theorem}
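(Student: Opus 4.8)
The plan is to construct the representation $\sigma \colon H_0 \to \mathrm{Aut}(\mathfrak h)$ by restricting the adjoint-type action that already exists on the ambient pair. Recall that $(\widetilde{GL(m|n)}, \mathfrak{gl}(m|n), \mathrm{Ad})$ is a super Harish-Chandra pair, and $\mathfrak h \subseteq \mathfrak{gl}(m|n)$ is a super Lie subalgebra with $\mathfrak h_0 \subseteq \mathfrak{gl}(m)\oplus\mathfrak{gl}(n) = \mathrm{Lie}(\widetilde{GL(m|n)})$. Since $H_0$ is the simply connected Lie group with $\mathrm{Lie}(H_0) = \mathfrak h_0$, the inclusion $\mathfrak h_0 \hookrightarrow \mathrm{Lie}(\widetilde{GL(m|n)})$ integrates to a (unique) Lie group homomorphism $\iota \colon H_0 \to \widetilde{GL(m|n)}$. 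I would then attempt to define $\sigma(x) := \mathrm{Ad}(\iota(x))|_{\mathfrak h}$ for $x \in H_0$, which requires the key claim that $\mathrm{Ad}(\iota(x))$ preserves $\mathfrak h$.

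The main obstacle is exactly this invariance claim: one must show $\mathrm{Ad}(\iota(H_0))\,\mathfrak h \subseteq \mathfrak h$. First I would handle this infinitesimally. For $X \in \mathfrak h_0$ and $Y \in \mathfrak h$, we have $\frac{d}{dt}\big|_0 \mathrm{Ad}(\iota(\exp tX))Y = [X,Y]$, which lies in $\mathfrak h$ because $\mathfrak h$ is a subalgebra. Hence the vector field on $\mathfrak{gl}(m|n)$ generating the flow $t \mapsto \mathrm{Ad}(\iota(\exp tX))$ is tangent to the linear subspace $\mathfrak h$; its flow therefore preserves $\mathfrak h$, so $\mathrm{Ad}(\iota(\exp tX))\,\mathfrak h = \mathfrak h$ for all $t$. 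Since $H_0$ is connected, it is generated by such exponentials, and we conclude $\mathrm{Ad}(\iota(x))\,\mathfrak h = \mathfrak h$ for every $x \in H_0$. This defines $\sigma \colon H_0 \to \mathrm{Aut}(\mathfrak h)$, and it is a Lie group homomorphism because $\mathrm{Ad}$ and $\iota$ both are, and restriction is functorial; smoothness is inherited from smoothness of $\mathrm{Ad} \circ \iota$.

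It remains to verify the two defining axioms of a super Harish-Chandra pair for $(H_0, \mathfrak h, \sigma)$. Axiom (a), $\mathfrak h_0 \cong \mathrm{Lie}(H_0)$, holds by the very definition of $H_0$. For axiom (b), I must check that $\sigma(H_0)|_{\mathfrak h_0} = \mathrm{Ad}_{H_0}$ (the adjoint representation of $H_0$ on its own Lie algebra $\mathfrak h_0$) and that $d\sigma(X)Y = [X,Y]$ for $X \in \mathfrak h_0$, $Y \in \mathfrak h$. The second of these was already computed above: $d\sigma(X)Y = \frac{d}{dt}\big|_0 \mathrm{Ad}(\iota(\exp tX))Y = [X,Y]$. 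For the first, since $\iota$ is a Lie group homomorphism we have $\mathrm{Ad}_{\widetilde{GL(m|n)}}(\iota(x)) \circ d\iota = d\iota \circ \mathrm{Ad}_{H_0}(x)$; restricting to $\mathfrak h_0$ and identifying $\mathfrak h_0$ with its image under $d\iota$ gives $\sigma(x)|_{\mathfrak h_0} = \mathrm{Ad}_{H_0}(x)$. This completes the verification, and so $(H_0, \mathfrak h, \sigma)$ is a super Harish-Chandra pair. One subtle point worth a remark: $\iota$ need not be injective, but this is irrelevant — we only use it to transport the adjoint action, and the argument above goes through verbatim whether or not $\iota$ has a kernel.
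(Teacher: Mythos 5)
Your proof is correct, but it takes a genuinely different route from the paper's. The paper builds $\sigma$ intrinsically: it takes the adjoint representation $ad:\mathfrak h_0\rightarrow End(\mathfrak h)$, composes with the exponential $Exp:End(\mathfrak h)\rightarrow Aut(\mathfrak h)$ and a local inverse of $exp:\mathfrak h_0\rightarrow H_0$ to get $\sigma$ on a neighbourhood of the identity, invokes Campbell--Hausdorff to see that this local map is a homomorphism, and then extends it to all of $H_0$ using (simple) connectedness. You instead exploit the ambient pair $(\widetilde{GL(m|n)},\mathfrak{gl}(m|n),Ad)$: you integrate the inclusion $\mathfrak h_0\hookrightarrow \mathfrak{gl}(m)\oplus\mathfrak{gl}(n)$ to a homomorphism $\iota:H_0\rightarrow \widetilde{GL(m|n)}$ (here simple connectedness is used), prove that $Ad(\iota(H_0))$ preserves $\mathfrak h$ by the tangency-plus-connectedness argument, and define $\sigma$ as the restriction. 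What your approach buys is that the homomorphism property and smoothness of $\sigma$ come for free from those of $Ad$ and $\iota$, so no Campbell--Hausdorff computation or local-to-global extension of a homomorphism is needed; what the paper's approach buys is that it never refers to the ambient group, so it would apply verbatim to an abstract pair $(\mathfrak h_0\subseteq\mathfrak h)$ not presented as a subalgebra of $\mathfrak{gl}(m|n)$. Your closing remark that injectivity of $\iota$ is irrelevant is accurate and worth keeping, and your verification of the two axioms, including $d\sigma(X)Y=[X,Y]$ and the compatibility $\sigma(x)|_{\mathfrak h_0}=Ad_{H_0}(x)$ via the intertwining relation for $\iota$, is complete.
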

\begin{proof}
Let $ad:\mathfrak h_0\rightarrow End(\mathfrak h)$ with $X\mapsto ad_X$ be the adjoint representation of $\mathfrak h_0$ and $exp:\mathfrak h_0\rightarrow H_0$ be the exponential map. Since $End(\mathfrak h)$ is the set of even morphisms, so we have the map $Exp:End(\mathfrak h)\rightarrow Aut(\mathfrak h)$. See \cite{Rothstein} for more details. One can define
 locally $\sigma:=Exp\circ ad\circ exp^{-1}.$ By Campbell-Hausdorff theorem  $\sigma$ is a homomorphism . Since $H_0$ is connected, $\sigma$ may be defined globally on $H_0$ so it is a representation of $H_0$ over $\mathfrak h.$ It is seen that $(H_0,\mathfrak{h},\sigma)$ is a super Harish-Chandra pair.
 \end{proof}
 
 We denote by $H$ the super Lie group associate with this super Harish-Chandra pair. It is seen that $H$ is a super Lie group associated to odd involution $\nu$. So we call it $\nu$-Lie Group. 
It can be shown that $H$ acts on ${}_\nu G_{k|l}(m|n)$.

\newpage
\providecommand{\bysame}{\leavevmode\hbox to3em{\hrulefill}\thinspace}
\providecommand{\MR}{\relax\ifhmode\unskip\space\fi MR }

\providecommand{\href}[2]{#2}

\end{document}